\definecolor{vert}{RGB}{15,120,5}
\definecolor{gris}{RGB}{128,128,128}
\definecolor{bleu}{RGB}{0,50,150}
\definecolor{rouge}{RGB}{149,24,24}
\crefname{equation}{}{}
\title{Étale motives of geometric origin}
\author{Raphaël Ruimy, Swann Tubach}
\date{}
\theoremstyle{plain}
\newtheorem{thm}{Theorem}[subsection]
\newtheorem{prop}[thm]{Proposition}
\newtheorem*{thm*}{Theorem}
\newtheorem*{mainthm}{Main Theorem}
\newtheorem*{prop*}{Proposition}
\newtheorem{lem}[thm]{Lemma}
\newtheorem{cor}[thm]{Corollary} 
\theoremstyle{definition}
\newtheorem{defi}[thm]{Definition}
\newtheorem*{defi*}{Definition}
\theoremstyle{remark}
\newtheorem{rem}[thm]{Remark}
\newtheorem*{rem*}{Remark}
\newtheorem{ex}[thm]{Example}
\numberwithin{equation}{thm}
\newcommand{\R}{\mathbb{R}}
\newcommand{\Q}{\mathbb{Q}}
\newcommand{\Z}{\mathbb{Z}}
\newcommand{\N}{\mathbb{N}}
\newcommand{\Spec}{\mathrm{Spec}}
\newcommand{\HH}{\mathrm{H}}
\newcommand{\D}{\mathrm{D}}
\newcommand{\mc}{\mathcal}
\newcommand{\Dd}{\mathrm{D}}
\newcommand{\colim}{\mathrm{colim}}
\newcommand{\Hom}{\mathrm{Hom}}
\newcommand{\F}{\mathbb{F}}
\DeclareMathOperator{\sHom}{\mathscr{H}\text{\kern -3pt {\calligra\large om}}\,}
\newcommand{\Map}{\mathrm{Map}}
\newcommand{\et}{\mathrm{\acute{e}t}}\newcommand{\DM}{\mathrm{DM}^{\et}}
\newcommand{\Mod}{\mathrm{Mod}}
\newcommand{\Sh}{\mathrm{Sh}}
\newcommand{\catinfty}{\mathrm{Cat}_\infty}
\newcommand{\DMgm}{\mathrm{DM}^{\et}_{\mathrm{gm}}}
\newcommand{\DMc}{\mathrm{DM}^{\et}_{c}}
\newcommand{\rar}{\to}
\newcommand{\subjclass}[2][2020]{%
  \let\@oldtitle\@title%
  \gdef\@title{\@oldtitle\footnotetext{#1 \emph{Mathematics subject classification.} #2}}%
}
\newcommand{\keywords}[1]{%
  \let\@@oldtitle\@title%
  \gdef\@title{\@@oldtitle\footnotetext{\emph{Key words and phrases.} #1.}}%
}
\begin{document}
\subjclass[2020]{14F42, 14F08}
\keywords{Voevodsky motives, constructible sheaves, descent}
\maketitle

\begin{abstract}
  Over qcqs finite-dimensional schemes, we prove that étale motives of geometric origin can be characterised by a constructibility property which is purely categorical, giving a full answer to the question ``Do all constructible étale motives come from geometry?'' which dates back to Cisinski and Déglise's work. We also show that they afford the continuity property and satisfy h-descent and Milnor excision.
\end{abstract}
 \tableofcontents

\subsection{Introduction.}

Consider a scheme $X$ and a derived\footnote{We define our $\infty$-categories with derived coefficients so that we may use derived rings of the form $\Lambda\otimes_\Z^L\Z/n\Z$. However, all main theorems are stated for ordinary commutative rings as the extension to derived rings with heavy boundedness assumptions seems artificial.} commutative ring of coefficients $\Lambda$. We denote by $\DM(X,\Lambda)$ the stable $\infty$-category of étale motives defined as the Tate-stabilisation of the category of $\mathbb{A}^1$-local étale hypersheaves of $\Lambda$-modules over the category of smooth $S$-schemes. This version of étale motives without transfers was studied by Ayoub in \cite{MR3205601}; it is equivalent over Noetherian schemes to the category of h-motives studied by Cisinski and Déglise in \cite{MR3477640}. It is endowed with the six functors formalism by \cite[Theorem 4.6.1]{MR4466640}. Denote by $\Lambda_X$ the unit object of $\DM(X,\Lambda)$.

In this paper we explore some properties of étale motives of geometric origin:

\begin{defi*}
  Let $X$ be a scheme and let $\Lambda$ be a commutative ring. Let $M$ be an object of $\DM(X,\Lambda)$. We say that the motive $M$ is \emph{of geometric origin} (or simply \emph{geometric}) if it belongs to the thick subcategory $\DMgm(X,\Lambda)$ of $\DM(X,\Lambda)$ generated by the $f_\sharp(\Lambda_Y)(n)$ for $f\colon Y\to X$ smooth.
  %\begin{enumerate}
   %The motive $M$ is \emph{constructible} if for any open affine $U\subseteq X$, there is a finite stratification $U_i\subseteq U$  made of constructible locally closed subschemes such that each $M|_{U_i}$ is dualisable.
  %\end{enumerate}  
\end{defi*}

An ind-regular ring is a filtered colimit of regular rings. Our main goal is to prove that motives of geometric origin form an étale hypersheaf and to give a purely categorical characterisation of geometric étale motives:
%A priori, motives of geometric origin do not form an étale sheaf of $\infty$-categories. 
% To get round this issue, Cisinski and Déglise introduced in \cite{MR3477640} 
\begin{mainthm}(\Cref{consgmDM} and \Cref{etlocDM})
  Let $\Lambda$ be a commutative ind-regular ring. 
  \begin{enumerate}    
    \item Let $X$ be a qcqs finite-dimensional scheme. An étale motive $M$ over $X$ is of geometric origin if and only if it is \emph{constructible}, that is for any open affine $U\subseteq X$, there is a finite stratification $U_i\subseteq U$  made of constructible locally closed subschemes such that each $M|_{U_i}$ is dualisable.
    \item The functor $\DMgm(-,\Lambda)$ is an étale hypersheaf over qcqs finite-dimensional schemes.
  \end{enumerate}
\end{mainthm}

This question dates back to \cite{MR3477640}, where Cisinski and Déglise considered étale-locally geometric\footnote{Beware that in \emph{loc. cit.}, they call \emph{constructible} what we chose to call \emph{of geometric origin} in this paper. We chose this terminology because they are the motives that can be built from objects coming from geometry. In addition, objects that are called \emph{locally constructible} by Cisinski and Déglise are more similar to classical constructible sheaves. In any case, the purpose of this paper is to show that all those notions coincide.} motives and showed that over Noetherian schemes, the latter coincides with constructible motives. This result does not hold for a general ring of coefficients (see \Cref{Contrex}).

To prove the above theorem, we need to generalise to non-Noetherian schemes the continuity properties proved in \textit{loc. cit.}

\begin{thm*}(Continuity; \Cref{continuitygmDM})
  Let $\Lambda$ be a commutative ring. Consider a qcqs finite-dimensional scheme $X$ that is the limit of a projective system of qcqs finite-dimensional schemes $(X_i)$ with affine transition maps.
    Then, the natural map
    $$\colim_i \hspace{1mm}\DMgm(X_i,\Lambda) \to \DMgm(X,\Lambda)$$
    is an equivalence. 
\end{thm*}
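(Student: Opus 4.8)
The plan is to prove that the canonical functor $\Phi\colon\colim_i\DMgm(X_i,\Lambda)\to\DMgm(X,\Lambda)$ is an equivalence, by establishing full faithfulness and essential surjectivity separately. Write $f_i\colon X\to X_i$ and $f_{ij}\colon X_j\to X_i$ for the (affine) transition maps. For a smooth $g\colon Y\to X_i$, the base change square over $f_i$ produces a smooth $g'\colon Y'\to X$ and a map $f'\colon Y'\to Y$, and smooth base change gives $f_i^*g_\sharp\simeq g'_\sharp\,(f')^*$; thus $f_i^*\bigl(g_\sharp\Lambda_Y(n)\bigr)\simeq g'_\sharp\Lambda_{Y'}(n)$ is again one of the generators. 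Hence each $f_i^*$ restricts to a functor $\DMgm(X_i,\Lambda)\to\DMgm(X,\Lambda)$, and these assemble into $\Phi$. The colimit on the left is taken among stable $\infty$-categories; once we check that idempotents descend along the system (which we do below), it is automatically idempotent complete.

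For full faithfulness, recall that mapping spectra into and out of a thick subcategory are detected, by a two-variable d\'evissage, on any generating set; since moreover the inclusions $\DMgm\subseteq\DM$ are fully faithful, it suffices to prove that for smooth $g\colon Y\to X_{i_0}$, $h\colon Z\to X_{i_0}$ and all Tate twists and shifts, the comparison map
$$\colim_{j\geq i_0}\Map_{\DM(X_j,\Lambda)}\bigl(g_{j,\sharp}\Lambda_{Y_j}(n)[m],\,h_{j,\sharp}\Lambda_{Z_j}(n')[m']\bigr)\ \longrightarrow\ \Map_{\DM(X,\Lambda)}\bigl(g_\sharp\Lambda_Y(n)[m],\,h_\sharp\Lambda_Z(n')[m']\bigr)$$
is an equivalence, where $Y_j,Z_j$ are the base changes to $X_j$. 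Applying $g_\sharp\dashv g^*$, the smooth base change isomorphism $g^*h_\sharp\simeq p_\sharp q^*$ for the cartesian square with corner $W:=Y\times_{X_{i_0}}Z$ (projections $p\colon W\to Y$ and $q\colon W\to Z$), and the invertibility of Tate twists and shifts, one identifies the two sides with the two ends of the comparison map for $\Map_{\DM(Y_j,\Lambda)}\bigl(\Lambda_{Y_j},\,p_{j,\sharp}\Lambda_{W_j}(n'-n)[m'-m]\bigr)$, where $p_j\colon W_j:=Y_j\times_{X_j}Z_j\to Y_j$; both arguments are of geometric origin over $Y_j$, and $Y=\lim_jY_j$ (likewise $W=\lim_jW_j$) is a cofiltered limit, with affine transition maps, of qcqs finite-dimensional schemes, being smooth and of finite presentation over the $X_j$. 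Thus full faithfulness follows from continuity of $\Map_{\DM(-,\Lambda)}(-,-)$ on geometric-origin objects along such limits.

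For essential surjectivity, let $M\in\DMgm(X,\Lambda)$. It is built from finitely many generators $g_\sharp\Lambda_Y(n)$ by finitely many cofiber sequences, shifts, twists and retracts. Each smooth $Y\to X$ is of finite presentation, hence descends — compatibly with base change — to a smooth $Y_i\to X_i$ for $i$ large enough (standard spreading-out); choosing $i$ large enough to accommodate all the finitely many generators appearing produces $M_i'\in\DMgm(X_i,\Lambda)$ with $M$ a retract of $f_i^*M_i'$. The idempotent $e\in\pi_0\Map_{\DM(X,\Lambda)}(f_i^*M_i',f_i^*M_i')$ cutting out $M$ lies, by the full faithfulness just proved, in $\colim_{j\geq i}\pi_0\Map_{\DM(X_j,\Lambda)}(f_{ij}^*M_i',f_{ij}^*M_i')$; hence it is the image of an endomorphism at some level $j$, which becomes idempotent after enlarging $j$, and since $\DMgm(X_j,\Lambda)$ is idempotent complete this idempotent splits off an object $M_j$ with $\Phi(M_j)\simeq M$. (This also shows that idempotents descend, as used above.)

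There remains the core input: continuity of $\Map_{\DM(-,\Lambda)}(-,-)$ on geometric-origin objects along cofiltered limits of qcqs finite-dimensional schemes with affine transition maps; this is where the real work lies, and I expect it to be the main obstacle. It is cleanest to deduce it from continuity of $\DM(-,\Lambda)$ itself, i.e.\ from the assertion that the natural functor $\colim_i\DM(X_i,\Lambda)\to\DM(X,\Lambda)$ is an equivalence of presentable $\infty$-categories — since filtered colimits in $\PrL$ are computed in $\catinfty$, this immediately gives the desired mapping-spectrum identity for every pair of objects. The only non-formal ingredient is that forming $\AAA$-local étale hypersheaves of $\Lambda$-modules commutes with the limit: $\AAA$-localization and the Tate stabilisation are harmless, and one is reduced to continuity for hypercomplete étale sheaves of $\Lambda$-modules along the inverse limit of the small étale sites — precisely where the finite Krull dimension hypothesis intervenes, to bound cohomological dimension and thereby keep hypercompletion under control (in the spirit of hyperdescent for étale sheaves of spectra, together with the classical continuity of the étale topos already used in \cite{MR3205601}). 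Alternatively, one can reduce a general qcqs finite-dimensional base to the excellent Noetherian case by Noetherian approximation, where continuity for étale motives is known by \cite{MR3205601} and \cite{MR3477640}; in either case, once this input is in place, $\Phi$ is an equivalence.
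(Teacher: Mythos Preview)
Your essential surjectivity argument and the reduction of full faithfulness to a mapping-spectrum identity between generators are both fine and close to the paper's. The gap is in the ``core input''. The assertion that filtered colimits in $\PrL$ are computed in $\catinfty$ is false: colimits in $\PrL$ are computed by passing to right adjoints and taking the \emph{limit} in (large) $\infty$-categories. What \emph{is} true is that for compactly generated categories with compact-preserving transition functors, the compact objects of the $\PrL$-colimit are the idempotent completion of the $\catinfty$-colimit of the compact objects, and the mapping-spectrum identity then holds \emph{for compact objects}. But geometric étale motives are not compact in $\DM(X,\Lambda)$ for general integral $\Lambda$ --- this is exactly why \Cref{conscompDM} carries the $\Lambda$-finite hypothesis. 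So even granting that $\colim_i\DM(X_i,\Lambda)\to\DM(X,\Lambda)$ is an equivalence in $\PrL$, the mapping-spectrum identity for geometric objects does not follow formally, and your reduction has only restated the problem.

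The paper circumvents this by a fracture argument: one checks the comparison after $-\otimes\Q$ and after $-\otimes\Z/p\Z$ separately. Rationally, \Cref{chgbasratDM} lets one move $\otimes\Q$ inside the $\Map$, after which geometric objects \emph{are} compact by \cite[Lemma~5.1(i)]{MR4319065}, and then continuity of the presentable categories \cite[Lemma~2.5.11]{MR4466640} applies exactly as you wanted. Mod $p$, rigidity (\Cref{rigidity}) identifies $\DM$ with étale hypersheaves, and one invokes continuity for constructible étale sheaves (\Cref{contDcons}). Your alternative --- reduce to Noetherian $X_i$ and cite \cite{MR3205601,MR3477640} --- is closer to a workable route, but note that the integral continuity results in \cite{MR3477640} are themselves proved via this same rational/torsion decomposition, so invoking them does not bypass the issue; and as written the reference is too vague to constitute a proof.
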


Using continuity, the most critical case of the main theorem is the case where $X=\Spec(k)$ with $k$ a field and where $\Lambda$ is regular. In that particular case, the result can be deduced from the case of rational coefficients which was proved in \cite{MR3477640} combined with results of \cite{ruimyAbelianCategoriesArtin2023} on Artin motives. The latter are consequences of results of \cite{balmer-gallauer} which imply that étale sheaves of the form $f_\sharp \Lambda_L$ for $L/k$ a finite separable extension, generate the category of étale-locally perfect constant sheaves as a thick subcategory of itself. 

We end the paper with some non-trivial descent properties of geometric étale motives. Namely we prove that they satisfy h-descent in the sense of \cite[Tag 0ETQ]{stacks-project} generalizing \cite[Corollary 5.5.7.]{MR3477640} to the non-Noetherian setting and we prove that they satisfy Milnor excision in the sense of \cite{zbMATH07335443}, giving a generalisation of the analogous of \cite[Theorem 5.6]{MR4319065} for geometric motives:

\begin{thm*}(\Cref{hdesc} and \Cref{milnor})
  Let $\Lambda$ be a commutative ring (\emph{resp.} an ind-regular commutative ring). The functor $\DMgm(-,\Lambda)$ satisfies Milnor excision (\emph{resp.} h-descent) over qcqs finite-dimensional schemes.
\end{thm*}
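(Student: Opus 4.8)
The plan is to deduce both statements from the corresponding properties of the large category $\DM(-,\Lambda)$, using that the subcategory of geometric motives is, in each case, local in the appropriate sense. In both situations full faithfulness of the comparison functor is formal: $\DMgm(-,\Lambda)$ is a full subcategory of $\DM(-,\Lambda)$, the pullback of a geometric motive along an arbitrary morphism is geometric (as $f^\ast$ commutes with $f_\sharp$ and Tate twists), and mapping spaces are computed in $\DM(-,\Lambda)$; so only essential surjectivity is at issue, i.e.\ one must show that a motive glued from geometric pieces is again geometric. Throughout we use that $\DM(-,\Lambda)$ satisfies the localization property and is continuous along cofiltered limits with affine transition maps, which --- combined with the approximation of qcqs finite-dimensional schemes by Noetherian ones and the continuity of $\DMgm(-,\Lambda)$ (\Cref{continuitygmDM}) --- allows us to assume when convenient that the schemes involved are Noetherian and finite-dimensional; over such schemes the h-hyperdescent and Milnor excision of $\DM(-,\Lambda)$ itself are available, the latter by adapting \cite[Thm.~5.6]{MR4319065}.

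For h-descent we assume $\Lambda$ ind-regular and invoke \Cref{consgmDM}: over qcqs finite-dimensional schemes $\DMgm(-,\Lambda)$ is exactly the subcategory of constructible --- equivalently, since the unit object is compact, of compact --- motives in $\DM(-,\Lambda)$. The point is then that constructibility is h-local. We organise this using the generation of the h-topology by the étale topology together with proper cdh (abstract blow-up) squares, as in the proof of \cite[Cor.~5.5.7]{MR3477640}: the étale hyperdescent of $\DMgm(-,\Lambda)$ is \Cref{etlocDM}, so it remains to treat an abstract blow-up square, i.e.\ a morphism $p\colon Y\to X$ that is proper and an isomorphism over the complement $j\colon V=X\smallsetminus Z\hookrightarrow X$ of a closed immersion $i\colon Z\hookrightarrow X$, with $E=Z\times_X Y$. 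Given geometric motives over $Y$, $E$ and $Z$ glued (by h-descent for $\DM(-,\Lambda)$) into $M\in\DM(X,\Lambda)$, the localization triangle $j_!j^\ast M\to M\to i_\ast i^\ast M$ realises $M$ as an extension whose third term is $i_\ast$ of the given geometric motive over $Z$, hence geometric (as $i_\ast$ preserves geometric motives), and whose first term is $j_!j^\ast M=j_\sharp(j^\ast M)$ with $j^\ast M$ the restriction via $p^{-1}$ of the given geometric motive over $Y$, hence also geometric (as $j_\sharp$ preserves geometric motives). Since $\DMgm(X,\Lambda)$ is thick, $M$ is geometric.

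For Milnor excision, let $R=R_1\times_{R'}R_2$ be a Milnor square, so $i\colon Z=\Spec R_2\hookrightarrow X=\Spec R$ is the closed immersion of ideal $I=\ker(R\to R_2)$; writing $I_1=\ker(R\to R_1)$ one has $I_1I=0$, so $I_1$ restricts to zero on the open complement $j\colon V=X\smallsetminus Z\hookrightarrow X$ and the projection $\Spec R_1\times_X V\to V$ is an isomorphism. Given geometric motives $M_1$, $M_2$ over $\Spec R_1$, $\Spec R_2$ with compatible restrictions to $\Spec R'$, glued by Milnor excision for $\DM(-,\Lambda)$ into $M\in\DM(X,\Lambda)$, one has $i^\ast M=M_2$, which is geometric, and $j^\ast M$ is identified with the restriction of $M_1$ to the open subscheme $\Spec R_1\times_X V\cong V$ of $\Spec R_1$, which is geometric. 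As in the previous paragraph, the localization triangle $j_!j^\ast M\to M\to i_\ast i^\ast M$, together with the fact that $i_\ast=i_!$ preserves geometric motives (for a closed immersion) and $j_!=j_\sharp$ preserves geometric motives (for an open immersion), shows $M\in\DMgm(X,\Lambda)$. This argument uses no hypothesis on $\Lambda$.

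The principal obstacle is exactly this essential-surjectivity step, that is, the locality of geometricity. The delicate point is that a Milnor square --- or an abstract blow-up square --- need not be derived-cartesian, so there is no naive way to glue geometric objects along the square directly; passing instead through the localization triangle along the closed subscheme, where one of the legs becomes an isomorphism, is what makes the argument work. This in turn depends on the stability of geometric motives under $i_\ast$ for a closed immersion $i$, an instance of the stability of geometric motives under the six operations over general qcqs bases; that, the descent properties of $\DM(-,\Lambda)$ itself over qcqs finite-dimensional schemes, and the generation of the h-topology by the étale topology and proper cdh squares in this generality, are the ingredients that must be secured --- more routine, but not entirely formal.
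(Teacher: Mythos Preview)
Your localization-triangle arguments for effectivity are clean, but both halves rest on properties of the large category $\DM(-,\Lambda)$ that are not available in the generality you claim, and this is where the genuine content lies.

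For h-descent, the assertion that the h-topology is generated by the étale topology together with abstract blow-up squares is false: these generate only the eh-topology, which is strictly coarser. A finite flat surjection such as $t\mapsto t^2$ on $\mathbb{A}^1_k$, or a purely inseparable field extension, is an h-cover that admits no refinement by étale covers and abstract blow-ups. The paper therefore uses the decomposition $\mathrm{h}=\mathrm{cdh}+\mathrm{fppf}$ of \cite{zbMATH06786946}; after \Cref{cdh} and \Cref{etlocDM}, what remains is finite flat descent, which is handled by approximating to the Noetherian case, stratifying the base so that over each stratum the finite map becomes a composite of a finite étale map and a purely inseparable one, and running an induction along the stratification using localization. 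Your localization-triangle idea is close in spirit to this dévissage step, but it only enters once the correct decomposition of the h-topology is in place and finite flat covers are confronted directly; proving h-hyperdescent for $\DM(-,\Lambda)$ itself would require exactly the same work.

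For Milnor excision, you assume Milnor excision for $\DM(-,\Lambda)$ with arbitrary $\Lambda$, citing an ``adaptation'' of \cite[Thm.~5.6]{MR4319065}. That theorem treats only $\Q$-algebras; extending it to integral coefficients is precisely the nontrivial step, and the paper neither proves nor uses Milnor excision for the large category. Instead it works directly on $\DMgm$: after reducing to $f$ of finite type over a Noetherian base, all pushforwards in the square preserve geometric objects by \cite[Thm.~6.2.13]{MR3477640}, so the candidate $M=\mathrm{fib}\bigl(f_*M_Y\oplus i_*M_Z\to (fk)_*M_W\bigr)$ is already geometric; the remaining check that $f^*M\simeq M_Y$ and $i^*M\simeq M_Z$ is then fractured via \Cref{chgbasratDM} into the rational case (\cite{MR4319065}) and the torsion case (rigidity plus \cite{MR4278670}), and extended to general $\Lambda$ by passing through rings of finite type over $\Z$. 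If you wanted to salvage your route, you would have to establish Milnor excision for $\DM(-,\Lambda)$ first --- which amounts to the same fracture argument applied to the unit of the adjunction, tested against geometric generators via \Cref{chgbasratDM} --- so no work is actually saved. Your observation that $\Spec R_1\to\Spec R$ is an isomorphism over the open complement is correct and would give a slightly slicker proof of effectivity (avoiding the appeal to preservation of geometricity under $f_*$), but only once the descent for $\DM$ is in hand.
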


\begin{rem*}
  In an upcoming paper, we will define a category of geometric Nori motives with coefficients in an arbitrary commutative ring $\Lambda$ over qcqs finite-dimensional schemes and show that it satisfies arc-descent. We therefore expect $\DMgm(-,\Lambda)$ to satisfy arc-descent. 
\end{rem*}

\subsection{First descent properties.}
Until the end of the proof of the main theorem, we will use the following notation:
\begin{defi}
  \label{defDMc}
  Let $X$ be a scheme and let $\Lambda$ be a derived\footnote{As mentioned above, we make the definition here for a derived ring so that we may use it for torsion rings of the form $\Lambda\otimes^L_\Z\Z/n\Z$ with $\Lambda$ a commutative ring.} ring. We denote by $\DMc(X,\Lambda)$ the subcategory of $\DM(X,\Lambda)$ made of \emph{constructible motives}, that is those motives such that for any open affine $U\subseteq X$, there is a finite stratification $U_i\subseteq U$  made of constructible locally closed subschemes such that each $M|_{U_i}$ is dualisable.
  %\end{enumerate}  
\end{defi}

\begin{rem}
  If $X$ is Noetherian, then if $K$ is an object of $\DMc(X,\Lambda)$, we can find a finite stratification $(X_i)$ of $X$ such that each $K_{\mid X_i}$ is dualisable.
\end{rem}

\begin{rem}
  The functors of type $f^*$, $f_\sharp$ and $\otimes$ readily preserve geometric objects. The functor $i_*$, for $i$ a closed immersion, preserves geometric objects by using the localisation triangle.
  
  On the other hand, the functors of type  $f^*$ and $\otimes$ preserve constructible objects. The functors $j_\sharp$ and $i_*$, for $j$ an open immersion and $i$ a closed immersion, also preserve constructible objects.
\end{rem}

We first prove descent properties for geometric objects and for constructible objects. %This will allow reductions to the affine case a

\begin{lem}\label{descentDM}
Let $\Lambda$ be a commutative ring. The functor $\DMgm(-,\Lambda)$ is a Nisnevich hypersheaf over qcqs schemes.
% \begin{enumerate}
%   \item 
%   % \item The functor $\DMgm(-,\Lambda)$ is a cdh-sheaf (and even a cdh-hypersheaf when we restrict to qcqs schemes of finite valuative dimension in the sense of \cite[Section 2.3]{zbMATH07335443}).
% \end{enumerate}
\end{lem}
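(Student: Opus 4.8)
The plan is to bootstrap the Nisnevich hyperdescent of $\DMgm(-,\Lambda)$ from that of $\DM(-,\Lambda)$, which is known to be an étale hypersheaf over all schemes, hence in particular a Nisnevich hypersheaf. Fix a Nisnevich hypercover $U_\bullet\to X$ and write $p\colon U_0\to X$ for the underlying cover. Since $\DMgm(Y,\Lambda)$ is a full subcategory of $\DM(Y,\Lambda)$ stable under the functors $f^*$ (as recalled in the Remark above), the inclusions $\DMgm(U_n,\Lambda)\hookrightarrow\DM(U_n,\Lambda)$ are compatible with the simplicial structure and induce a fully faithful functor $\lim_\Delta\DMgm(U_\bullet,\Lambda)\hookrightarrow\lim_\Delta\DM(U_\bullet,\Lambda)\simeq\DM(X,\Lambda)$ whose essential image consists of the $M$ with $M|_{U_n}$ geometric for every $n$. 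As each $U_n$ maps to $U_0$ in the simplicial scheme $U_\bullet$ and $f^*$ preserves geometric objects, this image equals $\{M\in\DM(X,\Lambda)\ :\ p^*M\ \text{is geometric over}\ U_0\}$. Hence the lemma amounts to the local-to-global assertion: if $p\colon U\to X$ is a Nisnevich cover and $p^*M$ is geometric, then $M$ is geometric.

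For the local-to-global assertion I would first reduce, by the standard fact that the Nisnevich topology is generated by its cd-structure, to two model situations: a Zariski cover $X=U_1\cup U_2$ by two opens, and an elementary Nisnevich square, i.e. an open immersion $j\colon U\hookrightarrow X$ with closed complement $i\colon Z\hookrightarrow X$ together with an étale map $e\colon V\to X$ restricting to an isomorphism $e^{-1}(Z)_{\mathrm{red}}\iso Z_{\mathrm{red}}$. In both cases one exploits the localisation triangle $j_\sharp j^*M\to M\to i_*i^*M$: the first term is geometric because $j^*M$ is (being a restriction of the geometric motive $p^*M$) and $j_\sharp$ preserves geometric objects; and $i^*M$ is geometric over $Z$ because, using the nil-invariance of $\DM$ and the isomorphism $e^{-1}(Z)_{\mathrm{red}}\iso Z_{\mathrm{red}}$ (in the Zariski case simply $Z\subseteq U_2$), it is identified with the restriction of the geometric motive $e^*M$ (resp. $M|_{U_2}$) to a closed subscheme, and $f^*$ preserves geometric objects; then $i_*i^*M$ is geometric because $i_*$ preserves geometric objects (Remark above). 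Since $\DMgm(X,\Lambda)$ is thick, hence closed under extensions, the triangle forces $M$ to be geometric.

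Granting these two cases, one propagates the local-to-global statement to an arbitrary Nisnevich cover by the usual dévissage along a splitting sequence of the cover, and the hypercover case was already reduced to the cover case in the first paragraph. The step I expect to require the most care is precisely this last dévissage in the non-Noetherian qcqs setting: one must argue that a Nisnevich cover of a qcqs scheme is, up to refinement, built from finitely many elementary Nisnevich and Zariski squares, which rests on the completeness and regularity of the Nisnevich cd-structure and on the fact that a Nisnevich cover always splits over a dense open — facts that are classical over Noetherian finite-dimensional schemes but need to be set up with some attention in general. All the input specific to motives, by contrast, is the soft manipulation of the localisation triangle together with the stability of $\DMgm$ under $f^*$, $j_\sharp$ and $i_*$.
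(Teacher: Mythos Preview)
Your argument is correct and follows essentially the same strategy as the paper: reduce to showing that $\DMgm(-,\Lambda)$ is a subpresheaf of the hypersheaf $\DM(-,\Lambda)$ closed under the descent condition for elementary Nisnevich squares. The paper's proof is slightly slicker at the key step: instead of the localisation triangle together with nil-invariance to control $i^*M$ on the closed complement, it invokes Nisnevich excision directly, i.e.\ the cofiber sequence
\[
q_\sharp q^*M \longrightarrow j_\sharp j^*M \oplus e_\sharp e^*M \longrightarrow M
\]
(with $q\colon U\times_X V\to X$), so that only $(-)_\sharp$ for étale maps and $(-)^*$ are needed, all of which manifestly preserve geometric objects. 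Your concern about the non-Noetherian dévissage is unnecessary: the Nisnevich cd-structure on qcqs schemes is complete and regular in full generality, and the paper simply uses that a subsheaf of a hypersheaf is a hypersheaf on qcqs sites, bypassing any explicit splitting-sequence argument.
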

\begin{proof}
  It suffices to show that $\DMgm(-,\Lambda)$ it is a subsheaf of $\DM(-,\Lambda)$ (a subsheaf of a hypersheaf is itself a hypersheaf for qcqs sites). Assume that $(U,V)$ is a Nisnevich cover of $X$ with $U$ open and $V\to X$ étale and let $M$ be a motive over $X$. Then if $M|_U$ and $M|_V$ are geometric, so is $M|_{U\times_X V}$, so that by Nisnevich excision, the motive $M$ itself is geometric.
\end{proof}

\begin{lem}\label{etlocDM}
  Let $\Lambda$ be a commutative ring. The functor $\DMc(-,\Lambda)$ is an étale hypersheaf over qcqs schemes.
\end{lem}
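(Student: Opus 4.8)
The plan is to deduce this from étale hyperdescent for $\DM(-,\Lambda)$ itself. Since $\DMc(-,\Lambda)$ is, by construction, a subfunctor of $\DM(-,\Lambda)$ picking out full subcategories stable under the restriction functors $f^*$, it will suffice --- exactly as in the proof of \Cref{descentDM} --- to check the ``subsheaf'' condition: whenever $f\colon V\to X$ is an étale cover of a qcqs scheme, which we may take quasi-compact so that $V$ is again qcqs, and $M\in\DM(X)$ is such that $f^*M$ is constructible, then $M$ is constructible. (An object of $\lim_n\DMc(U_n)$ for an étale hypercover $U_\bullet\to X$ is an $M\in\DM(X)=\lim_n\DM(U_n)$ with $M|_{U_n}$ constructible for all $n$; in particular $M|_{U_0}$ is constructible and $U_0\to X$ is an étale cover.)

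I will use two inputs. The first is that \emph{dualisability is étale-local}: if $g\colon W\to Y$ is an étale cover and $g^*N$ is dualisable in $\DM(W)$, then $N$ is dualisable in $\DM(Y)$. This follows from étale (Čech) descent for $\DM$, which presents $\DM(Y)$ as a limit of the symmetric monoidal $\infty$-categories $\DM(W^{\times_Y(n+1)})$ along symmetric monoidal functors, together with the standard fact that a dualisable object of such a limit is precisely one whose image in each term is dualisable; each $N|_{W^{\times_Y(n+1)}}$ is pulled back from $g^*N$, hence dualisable once $g^*N$ is. The second is a topological statement about constructible subsets of a spectral space: \emph{any finite stratification of a qcqs scheme $V$ into constructible locally closed subschemes admits a common refinement $V=\bigsqcup_{a=1}^N V_a$ which can be ordered so that $O_\ell:=V_1\sqcup\cdots\sqcup V_\ell$ is a quasi-compact open for every $\ell$} --- in particular $V_N$ is constructible closed and $O_{N-1}$ is quasi-compact open. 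One proves this by writing each stratum as a difference of two quasi-compact opens and taking a maximal chain in the (finite, distributive) lattice of quasi-compact opens they generate: the covering relation in that lattice forces each successive difference of the chain to lie inside a single stratum. Finally I record the elementary remark that a motive lying in $\DMc(Y)$ for $Y$ qcqs admits a \emph{global} finite stratification into constructible locally closed subschemes with dualisable graded pieces, obtained by gluing the stratifications over a finite open affine cover of $Y$ (using that quasi-compact opens of a qcqs scheme are retrocompact, so that constructible locally closed subsets of them remain such in $Y$).

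The argument itself will be an induction on the least number $N$ of strata in a witness of constructibility of $f^*M$ over $V$. If $N\le 1$ then $f^*M$ is dualisable, so $M$ is dualisable by the first input, hence constructible. Suppose $N\ge 2$. Using the second input, choose the witness $V=\bigsqcup_{a=1}^N V_a$ so that each $O_\ell$ is quasi-compact open. Set $X':=f(O_{N-1})$, which is an open subscheme of $X$ since étale morphisms are open, and $Z:=X\setminus X'$, which is constructible and closed. The étale cover $O_{N-1}\to X'$ carries $M|_{X'}$ to $(f^*M)|_{O_{N-1}}$, which is witnessed constructible by the $N-1$ strata $V_1,\dots,V_{N-1}$ (no restriction is needed, since $V_a\subseteq O_{N-1}$ for $a<N$), so $M|_{X'}$ is constructible over $X'$ by the inductive hypothesis. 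On the other side, $f^{-1}(Z)\to Z$ is an étale cover and $f^{-1}(Z)\subseteq V_N$ because $f(O_{N-1})=X'$ is disjoint from $Z$; hence $(f^*M)|_{f^{-1}(Z)}$, a restriction of the dualisable $(f^*M)|_{V_N}$, is dualisable, so $M|_Z$ is dualisable over $Z$ by the first input. To conclude, take any open affine $U\subseteq X$: intersecting with $X'$ and $Z$ and combining a global finite stratification witnessing $M|_{U\cap X'}\in\DMc(U\cap X')$ (available by the second paragraph, $U\cap X'$ being qcqs) with the single dualisable stratum $M|_{U\cap Z}$ produces a finite stratification of $U$ with dualisable graded pieces; thus $M\in\DMc(X)$.

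The main obstacle is the inductive step: peeling a layer off the stratification of the cover $V$ requires that layer's complement to have \emph{open} image in $X$, which is exactly why one must first replace the given stratification of $V$ by one filtered by quasi-compact opens (the second input). Together with the étale-locality of dualisability (the first input), which rests on the behaviour of dualisable objects in limits of symmetric monoidal $\infty$-categories, these are the two non-formal ingredients; everything else is routine bookkeeping with the six operations and with constructible subsets of spectral spaces.
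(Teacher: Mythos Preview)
Your argument is correct in substance and takes a genuinely different route from the paper's. The paper (following \cite[Lemma~4.5]{MR4609461}) first reduces, via Noetherian approximation and \cite[Tag~03S0]{stacks-project}, to the case where the cover $f$ is a torsor under a finite group $G$; it then replaces the witnessing stratification of the total space by a $G$-equivariant one, which descends to a stratification of $X$, and concludes by the étale-locality of dualisability on each stratum. Your proof bypasses the torsor reduction and Noetherian approximation entirely: you peel strata off the cover one at a time, using only that étale maps are open (so that the image of a qc-open-filtered initial segment is open in $X$) together with the étale-locality of dualisability. This is more elementary and works uniformly over arbitrary qcqs bases without spreading out. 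Conversely, the paper's approach has the virtue of producing an explicit stratification of $X$ in one step rather than by an induction that changes the base.

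One small wrinkle to fix: you induct on the least number $N$ of strata among \emph{all} witnesses for $f^*M$, but in the inductive step you first invoke your second input to pass to a qc-open-filtered refinement, which may have $N'>N$ strata; the witness you then feed to the inductive hypothesis over $O_{N'-1}\to X'$ has $N'-1$ strata, and $N'-1$ need not be $<N$. The repair is immediate---run the induction on the least size of a \emph{qc-open-filtered} witness (which exists by your second input), so that peeling one layer strictly decreases the induction parameter---but as written the induction does not quite close.
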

\begin{proof}
  It suffices to show that $\DMc(-,\Lambda)$ is a subsheaf of $\DM(-,\Lambda)$. The proof is very similar to the proof of \cite[Lemma 4.5]{MR4609461}, so we only give a sketch.
  
  Take a qcqs scheme $X$ and an étale-locally constructible motive $M$ over $X$. We can assume $X$ to be affine and to have a surjective étale map $f\colon Y\to X$ such that $f^*(M)$ is constructible. We then reduce to $f$ being finite étale using \cite[Tag 03S0]{stacks-project} and then to the case where $f$ is a torsor under some finite group $G$: writing $X$ as a limit of Noetherian schemes, the map $f$ is the pullback of some étale map defined over a Noetherian scheme which we can then replace by a torsor under some finite group. 
  
  Finally, we can construct a $G$-equivariant stratification of $Y$ from the stratification $\mc{Y}$ that witnesses the constructibility of $f^*(M)$. This yields a stratification of $X$ whose pullback to $Y$ is $\mc{Y}$. Hence, replacing $X$ with one of its strata, we can assume that $f^*(M)$ is dualisable; as being dualisable is étale-local, the motive $M$ is also dualisable which finishes the proof.
  % Now, the natural map 
  % $$\underline{\Hom}(M,\Lambda_X)\otimes M \to \underline{\Hom}(M,M)$$
  % is an isomorphism after applying the conservative functor $f^*$ so it is an isomorphism and hence $M$ is dualisable, which finishes the proof.
  %
\end{proof}

\subsection{Continuity.}

We now  prove the continuity property for geometric motives. We first need some compatibilities with rationalisation: 

\begin{prop}\label{chgbasratDM}
    Let $\Lambda$ be a commutative ring, let $f\colon Y\to X$ be a morphism of qcqs finite-dimensional schemes, let $M$ and $N$ in $\DM(X,\Lambda)$ be motives over $X$ with $M$ constructible or geometric and let $P$ be a motive over $Y$. Then, the natural transformations below are equivalences:
    \begin{enumerate}
      \item $\Map(M,N)\otimes \Q \to \Map(M,N\otimes \Q)$.
      \item $\underline{\Hom}(M,N)\otimes \Q \to \underline{\Hom}(M,N\otimes \Q).$
      \item $f_*(P)\otimes \Q \to f_*(P\otimes \Q)$.
      \item $i^!(N)\otimes \Q \to i^!(N\otimes \Q)$ (for $i$ a closed immersion).
    \end{enumerate}
    %Les six foncteurs sont compatibles à rationnalisation et tenseur $\Z/n\Z$ (attention à Hom interne).
  \end{prop}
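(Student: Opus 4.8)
The plan is to reduce everything to the single statement that, for $M$ \emph{geometric}, the functors $\Map(M,-)$ and $\underline{\Hom}(M,-)$ commute with filtered colimits. This holds because the geometric motives are precisely the compact objects of $\DM(X,\Lambda)$: they form the thick subcategory generated by the compact generators $f_\sharp\un_Y(n)$ with $f$ smooth, and compact objects are stable under $\otimes$ (by the projection formula and smooth base change). Since $N\otimes\Q=\colim_n(N\xrightarrow{\cdot n}N)$ is a filtered colimit, and $-\otimes\Q$ is moreover exact and commutes with $\otimes$, with $f^*$, and (for $i$ a closed immersion, which has both adjoints) with $i_*$, this settles (1) and (2) in the geometric case at once; it also shows $j^*$ and $i_*$ commute with $-\otimes\Q$ for future use.

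Next I would deduce (3) and (4) formally from the geometric case of (1). For (3), since $\DM(X,\Lambda)$ is compactly generated it suffices to check that $f_*(P)\otimes\Q\to f_*(P\otimes\Q)$ becomes an equivalence after applying $\Map(C,-)$ for every compact $C$; by compactness of $C$ and the adjunction $f^*\dashv f_*$, both sides compute $\Map_Y(f^*C,P)\otimes\Q$, using the geometric case of (1) over $Y$ applied to $f^*C$, which is again geometric hence compact. For (4), apply the exact functor $-\otimes\Q$ to the localisation triangle $i_*i^!N\to N\to j_*j^*N$, where $j$ is the open complement of $i$; combining (3) for $j$ with the compatibility of $j^*$ and $i_*$ with $-\otimes\Q$ yields $i_*(i^!N\otimes\Q)\simeq i_*i^!(N\otimes\Q)$, and then one cancels $i_*$, which is conservative for a closed immersion.

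For the constructible case of (1) and (2) I would argue by dévissage. Both statements are Zariski-local on $X$ for $X$ qcqs — this is where one uses that $-\otimes\Q$, being exact, commutes with the finite limits occurring in Zariski descent for a finite affine cover — so we may assume $X$ affine and fix a finite stratification, which after a refinement we may take to be filtered by opens, on whose strata $M$ is dualisable; we induct on its length $\ell$. If $\ell=1$ then $M$ is dualisable: $\underline{\Hom}(M,-)=M^\vee\otimes-$ visibly commutes with $-\otimes\Q$, giving (2), while $\Map(M,N)=\Map(\un_X,M^\vee\otimes N)$ with $\un_X=(\id_X)_\sharp\un_X$ compact gives (1). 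For the inductive step, let $j\colon U\hookrightarrow X$ be the union of the first $\ell-1$ strata and $i\colon Z\hookrightarrow X$ the last, closed, stratum; the localisation triangle $j_!j^*M\to M\to i_*i^*M$ reduces us to $j_!(j^*M)$ and $i_*(i^*M)$. For the former, $\Map_X(j_!A,-)=\Map_U(A,j^*(-))$ and $\underline{\Hom}_X(j_!A,-)=j_*\underline{\Hom}_U(A,j^*(-))$, so we conclude from the inductive hypothesis applied to $j^*M$ over $U$ (of length $\ell-1$), together with (3) for $j$ and the compatibility of $j^*$ with $-\otimes\Q$. For the latter, $\Map_X(i_*B,-)=\Map_Z(B,i^!(-))$ and $\underline{\Hom}_X(i_*B,-)=i_*\underline{\Hom}_Z(B,i^!(-))$, and since $i^*M$ is dualisable (it is a single stratum) we conclude from the base case over $Z$, together with (4) and the compatibility of $i_*$ with $-\otimes\Q$.

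The main obstacle here is not a single hard computation but two points of care. The first is the structural input that geometric motives coincide with the compact objects of $\DM(-,\Lambda)$ over an arbitrary qcqs finite-dimensional base — so that pullbacks of geometric motives are compact and mapping spectra out of them are finitary — which one extracts from the construction of $\DM$ and the six-functor formalism recalled in the introduction. The second is the bookkeeping of the dévissage: one must establish (3) and (4) using \emph{only} the easy geometric case of (1), strictly before invoking them for constructible $M$, and must check that the stratification length genuinely drops when passing from $X$ to $U$ and that $i^*M$ becomes dualisable on the last stratum, so that the induction is well-founded and the argument is non-circular.
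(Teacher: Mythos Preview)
Your reduction of (2)--(4) to the geometric case of (1), and your d\'evissage for the constructible case of (1)--(2), match the paper's argument; the gap is in the geometric case of (1) itself. You claim that geometric motives are the compact objects of $\DM(X,\Lambda)$, so that $\Map(M,-)$ commutes with all filtered colimits and hence with $-\otimes\Q$. This is false for general $\Lambda$: the identification of geometric with compact (the paper's \Cref{conscompDM}) requires $X$ to be $\Lambda$-finite in the sense of \cite{MR4319065}, and as the footnote there records, this holds for every finite-dimensional scheme only when $\Lambda$ is a $\Q$-algebra. Concretely, for $X=\Spec(\R)$ and $\Lambda=\Z$ the absolute Galois group $C_2$ has infinite cohomological dimension over $\Z$, so \'etale cohomology over $\Spec(\R)$ does not commute with filtered colimits and the unit $\Lambda_X$ is not compact in $\DM(X,\Lambda)$. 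The ``structural input'' you flag at the end cannot be extracted from the construction; it is simply not available over an arbitrary qcqs finite-dimensional base.

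The paper circumvents this as follows. After reducing (1) to $M=\Lambda_X$ via stability of rationalisable objects under $g_\sharp$, it does \emph{not} appeal to compactness; instead it proves directly that $\HH^i_\et(X,K)\otimes\Q\to\HH^i_\et(X,K\otimes\Q)$ is an isomorphism, using that the \emph{rational} \'etale cohomological dimension of a qcqs finite-dimensional scheme is finite by \cite[Corollary 3.29]{MR4296353}, together with \cite[Lemma 1.1.10]{MR3477640}, and then transports this to $\DM$ as in \cite[Corollary 5.4.8]{MR3477640}. The point is that $\Map(\Lambda_X,-)$ need not commute with arbitrary filtered colimits, but finite rational cohomological dimension is exactly what is needed for it to commute with the specific colimit computing $-\otimes\Q$. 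This is the ingredient your argument is missing.
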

  \begin{proof}
    We begin with the case where $M$ is geometric. In this proof, we say that $M$ is rationalisable if 
    $$\Map(M,N)\otimes \Q \to \Map(M,N\otimes \Q)$$ 
    is an equivalence. Rationalisable objects are closed under the functors of type $g_\sharp$. 
    Therefore, to prove (1), we can assume that $M=\Lambda_X$.
    Note first that if $K$ is a complex of étale sheaves of abelian groups, the canonical map:
    $$\HH^i_{\et}(X,K)\otimes \Q \to \HH^i_{\et}(X,K\otimes \Q)$$ is an isomorphism for any integer $i$. Indeed, \cite[Corollary 3.29]{MR4296353} gives a bound to the étale cohomological dimension with rational coefficients over qcqs finite-dimensional schemes, and the above assertion then follows from \cite[Lemma 1.1.10]{MR3477640}. By the same argument as \cite[Corollary 5.4.8]{MR3477640} this gives (1).

Geometric objects generate $\DM(X,\Lambda)$ as a localizing subcategory of itself. Hence to show that the maps of (2), (3) and (4) are equivalences, it suffices to show that they become equivalences after applying $\Map(C,-)$ for $C$ geometric. Since the functors of type $f^*$, $i_*$ and $\otimes$ preserve geometric objects, this follows by adjunction from the equivalence of (1).

%Hence, applying the first equivalence to $P$ and $P\otimes M$ with $P$ geometric yields the second equivalence, while applying it with $P$ and $f^*(P)$ yields the third.
  
  We now tackle the case where $M$ is constructible. The third equivalence implies that the functors of type $g^*$ preserve rationalisable objects. Hence, being rationalisable is Zariski-local: if we have a finite Zariski cover $\{U_i\to X\}$ and a motive $M$ over $X$ such that the $M|_{U_i}$ are rationalisable, so are the restrictions of $M$ over the intersections of the $U_i$; as we can write $M$ as a finite colimit with terms of the form $(V\to X)_\sharp(M|_{V})$ with $V$ an intersection of the $U_i$, the motive $M$ is also rationalisable. Hence, we can assume $X$ to be affine. 
  
  Note now that (4) implies that rationalisable objects are preserved by the functor $i_*$ for $i$ a closed immersion. By dévissage and localisation, we are therefore reduced to the case where $M$ is dualisable. In that case, replacing $N$ by $N\otimes M^\vee$ we can further reduce to the case where $M=\Lambda_X$ which we already proved. 
  
  Assertion (2) in the constructible setting can be proved in the same way as in the geometric setting. 
  \end{proof}

  For $\Lambda$ a derived ring, denote by $\Sh(X_\et,\Lambda)$ the $\infty$-category of étale hypersheaves of $\Lambda$-modules.
  The above proposition gives a control on the rationalised part of our categories, and the following theorem will be the control on the torsion part. 

  \begin{thm}[Bachmann]
    \label{rigidity}
    Let $n\in\N^*$ be an integer, let $\Lambda$ be a derived $\Z/n\Z$-algebra and let $X$ be a qcqs scheme such that $n$ is invertible on $X$. The natural functor 
    $$\rho_!\colon \Sh(X_\et,\Lambda)\to\DM(X,\Lambda)$$ is an equivalence. We will denote by $\Dd_\mathrm{cons}(X_\et,\Lambda)$ the inverse image of $\DMc(X,\Lambda)$ under this equivalence: as $\rho_!$ is symmetric monoidal it has the same description as \Cref{defDMc}, we call those \emph{constructible étale sheaves}.
  \end{thm}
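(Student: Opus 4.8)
The plan is to reduce this form of rigidity to the Suslin--Voevodsky rigidity theorem over a separably closed field. First I would reduce the coefficients: $\rho_!$ is symmetric monoidal and colimit‑preserving, hence compatible with base change of coefficients, and both $\Sh(X_\et,-)$ and $\DM(X,-)$ carry a $\Z/n\Z$‑algebra $\Lambda$ to modules over their value at $\Z/n\Z$ (i.e. $\Sh(X_\et,\Lambda)=\Mod_\Lambda(\Sh(X_\et,\Z/n\Z))$ and likewise for $\DM$); so it suffices to treat $\Lambda=\Z/n\Z$. Next I would reduce the scheme: $X\mapsto\Sh(X_\et,\Z/n\Z)$ and $X\mapsto\DM(X,\Z/n\Z)$ are étale hypersheaves of presentable $\infty$‑categories on the small étale site of $X$, both satisfy continuity along cofiltered limits of qcqs schemes with affine transition maps, and $\rho_!$ is natural in $X$; hence it is enough to show $\rho_!$ is an equivalence on stalks, i.e. when $X=\Spec(A)$ with $A$ strictly henselian local and $n\in A^\times$.

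So assume $X$ strictly henselian local. Then every étale cover of $X$ splits, so the étale $\infty$‑topos of $X$ is that of spaces and $\Sh(X_\et,\Z/n\Z)\simeq\Mod_{\Z/n\Z}$, with the constant sheaf corresponding to the unit; moreover $\rho_!$ sends this unit to $\un=\Lambda_X$. Since $\Mod_{\Z/n\Z}$ is generated under colimits by the unit and $\rho_!$ preserves colimits, the claim reduces to showing: (a) $\rho_!$ is fully faithful, and (b) $\un$ generates $\DM(X,\Z/n\Z)$ under colimits.

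For (a): at the level of étale hypersheaves on $\Sm_X$, the functor $\rho_!$ is the left adjoint to restriction along the fully faithful inclusion of the small étale site of $X$ into $\Sm_X$; this inclusion is continuous and cocontinuous for the étale topologies, so $\rho_!$ is fully faithful with essential image the hypersheaves left Kan extended from $X_\et$. Any such hypersheaf is $\mathbb{A}^1$‑invariant by homotopy invariance of étale cohomology with $\Z/n\Z$‑coefficients ($n$ invertible on $X$), so the $\mathbb{A}^1$‑localisation is the identity on the essential image; and the $\mathbb{P}^1$‑suspension coordinate $\widetilde{\Lambda}_X(\mathbb{P}^1)$ corresponds, via the Kummer sequence $1\to\mu_n\to\Gm\xrightarrow{\ n\ }\Gm\to1$, to a shift of the invertible object $\mu_n$ of $\Sh(X_\et,\Z/n\Z)$, so tensoring with it is already invertible and the $\mathbb{P}^1$‑stabilisation is fully faithful on the essential image as well. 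Chasing these identifications, $\rho_!\colon\Sh(X_\et,\Z/n\Z)\to\DM(X,\Z/n\Z)$ remains fully faithful.

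For (b): because $\mu_n$ is the constant sheaf on the strictly henselian local $X$, the invertible object $\un(1)$ is a shift of $\un$, so $\DM(X,\Z/n\Z)$ is generated under colimits by the $f_\sharp\Lambda_Y$ for $f\colon Y\to X$ smooth; when $f$ is étale these lie in the essential image of $\rho_!$, and for general smooth $f$ one reduces to the étale case — and ultimately to $X=\Spec(k)$ with $k$ separably closed, where (b) is the Suslin--Voevodsky rigidity theorem — using continuity together with the insensitivity of the six functors on torsion étale motives to replacing a strictly henselian local scheme by its closed point (Gabber rigidity / proper base change for torsion étale sheaves), compatibly with $\rho_!$ on both sides. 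The non‑formal content is thus concentrated in the classical rigidity facts for torsion étale cohomology with invertible coefficients — homotopy invariance, the Kummer‑sequence identification of the Tate twist with $\mu_n$, and the Suslin--Voevodsky theorem over a separably closed field — and I expect the genuine obstacle to be precisely that last reduction step: descending rigidity from a separably closed field to a strictly henselian base compatibly with the motivic and sheaf‑theoretic structures. Everything else (reduction of coefficients, étale hyperdescent, continuity, and the manipulations of presentable symmetric monoidal $\infty$‑categories) is formal.
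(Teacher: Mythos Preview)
The paper's proof is a two-line citation: invoke \cite[Theorem~3.1]{bachmannrigidity} for $\Lambda=\Z/n\Z$, then take $\Lambda$-modules on both sides. Your reduction to $\Lambda=\Z/n\Z$ is exactly that second step and matches the paper. Everything beyond that in your proposal is an attempt to \emph{reprove} the cited theorem of Bachmann--Hoyois rather than to quote it.

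As such a sketch, your outline follows classical lines (étale hyperdescent plus continuity to reduce to a strictly henselian local base; full faithfulness from homotopy invariance of torsion étale cohomology and the Kummer identification of the Tate twist with $\mu_n$; essential surjectivity by reducing to Suslin--Voevodsky over the separably closed residue field). The full-faithfulness half is essentially right. The gap you yourself flag, however, is real: invoking ``insensitivity of the six functors on torsion étale motives to replacing a strictly henselian local scheme by its closed point'' is dangerously close to assuming what you want to prove. Knowing that $i^*\colon\DM(X,\Z/n\Z)\to\DM(k,\Z/n\Z)$ is an equivalence for the closed point $i\colon\Spec(k)\hookrightarrow X$ is essentially rigidity itself; Gabber's theorem gives the analogous statement on the $\Sh(-_\et,\Z/n\Z)$ side, but transporting it to the $\DM$ side \emph{before} you know $\rho_!$ is an equivalence is precisely the difficulty. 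Bachmann--Hoyois do not organise their argument via a reduction to the residue field, so if you want a self-contained proof you should look at how they handle essential surjectivity rather than try to bootstrap from the field case.
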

  \begin{proof}
    Thanks to {\cite[Theorem 3.1]{bachmannrigidity}} the above functor is an equivalence if $\Lambda = \Z/n\Z$, and then this form of the theorem follows by taking $\Lambda$-modules on both sides.
  \end{proof}

  \begin{lem}[Martini-Wolf {\cite[Corollary 7.4.12]{martini_presentable_2022}}]
    \label{duallocconst}
    Let $\Lambda$ be a derived ring and let $X$ be a qcqs scheme. An object $K$ of $\Sh(X_\et,\Lambda)$ is dualisable if and only if there exists an étale covering $f\colon U\to X$ such that $f^*K$ is the constant sheaf associated to a perfect complex. 
  \end{lem}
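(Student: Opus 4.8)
The plan is to prove the two implications separately; the backward one is soft, the forward one carries all the content. For the backward direction I would argue that dualisability is étale-local. The constant sheaf functor $\Mod_\Lambda\to\Sh(U_\et,\Lambda)$ is symmetric monoidal (it is the inverse image part of the geometric morphism to the point), and the dualisable objects of $\Mod_\Lambda$ are exactly the perfect complexes; hence if $f^*K$ is the constant sheaf on a perfect complex it is dualisable in $\Sh(U_\et,\Lambda)$. To descend dualisability along the covering $f\colon U\to X$, I would use that $\Sh(-_\et,\Lambda)$ satisfies descent, so $\Sh(X_\et,\Lambda)\simeq\lim_{[n]\in\Delta}\Sh((U_n)_\et,\Lambda)$ for the Čech nerve $U_\bullet$ of $f$, and that $f^*$ is conservative (a covering is surjective, so every geometric point of $X$ lifts to $U$). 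Dualizing everything in sight, $(f^*K)^\vee$ inherits the descent datum dual to that of $K$ and hence glues to an object $K'\in\Sh(X_\et,\Lambda)$; the evaluation and coevaluation of $f^*K$ are compatible with this descent datum by uniqueness of duals, so they glue to maps $\un_X\to K\otimes K'$ and $K'\otimes K\to\un_X$, and the triangle identities may be checked after the conservative functor $f^*$. Thus $K$ is dualisable.

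For the forward direction, suppose $K$ is dualisable. For every geometric point $\bar x\to X$ the stalk functor $\bar x^*\colon\Sh(X_\et,\Lambda)\to\Mod_\Lambda$ is symmetric monoidal, so $K_{\bar x}=\bar x^*K$ is a dualisable $\Lambda$-module, i.e.\ a perfect complex. Since étale morphisms are open, it is enough to produce, for each such $\bar x$, an affine étale neighbourhood $(U,\bar x)\to(X,\bar x)$ on which $K$ becomes the constant sheaf on $K_{\bar x}$: the disjoint union (finite, by quasi-compactness) of such neighbourhoods over a family of geometric points covering $X$ then gives the required covering, with $f^*K$ constant on a perfect complex over each component.

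To trivialise $K$ near $\bar x$, I would pass to the strict henselisation $X^{\mathrm{sh}}=\Spec(\ocal^{\mathrm{sh}}_{X,\bar x})=\lim_i U_i$, the cofiltered limit of its affine étale neighbourhoods. The étale site of $X^{\mathrm{sh}}$ is trivial, so the stalk at the closed point is an equivalence $\Sh((X^{\mathrm{sh}})_\et,\Lambda)\simeq\Mod_\Lambda$ with inverse the constant sheaf functor; hence $K|_{X^{\mathrm{sh}}}$ is canonically the constant sheaf on $K_{\bar x}$. By continuity of $\Sh(-_\et,\Lambda)$ along cofiltered limits of qcqs schemes with affine transition maps, $\Sh((X^{\mathrm{sh}})_\et,\Lambda)\simeq\colim_i\Sh((U_i)_\et,\Lambda)$, so this trivialisation is the image of a morphism $\varphi_i\colon\underline{K_{\bar x}}|_{U_i}\to K|_{U_i}$ for some $i$. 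Source and target of $\varphi_i$ are dualisable over $U_i$ (a constant perfect complex is dualisable, and $K|_{U_i}$ is the restriction of the dualisable object $K$), and dualisable objects of a stable symmetric monoidal $\infty$-category are closed under cofibres — comparing via the five lemma the two fibre sequences obtained from $\underline{K_{\bar x}}|_{U_i}\to K|_{U_i}\to\mathrm{cofib}(\varphi_i)$ by applying $\underline{\Hom}(-,M)$ and $\underline{\Hom}(-,\un)\otimes M$ — so $C_i:=\mathrm{cofib}(\varphi_i)$ is dualisable over $U_i$. Its restriction to $X^{\mathrm{sh}}$ vanishes, hence $C_i$ becomes a zero object in $\colim_{j\geq i}\Sh((U_j)_\et,\Lambda)$; since an object of a filtered colimit of $\infty$-categories is zero already at a finite stage (its identity is nullhomotopic there), there is $j\geq i$ with $C_i|_{U_j}=0$, i.e.\ $\varphi_i|_{U_j}$ is an equivalence, and $U:=U_j$ does the job.

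The delicate input is continuity for $\Sh(-_\et,\Lambda)$ along cofiltered limits of schemes, which is exactly what lets one spread the trivialisation from the strict henselisation down to an honest étale neighbourhood; the remaining ingredients — symmetric monoidality of inverse images, the identification of dualisable $\Lambda$-modules with perfect complexes, and closure of dualisable objects under cofibres — are formal. In the generality of an arbitrary $\infty$-topos the statement is \cite[Corollary 7.4.12]{martini_presentable_2022}, so in the write-up it suffices to cite \emph{loc. cit.}
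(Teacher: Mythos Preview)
The paper does not prove this lemma; it is stated purely as a citation of \cite[Corollary 7.4.12]{martini_presentable_2022}. Your backward direction is fine, but the forward direction contains a genuine error.

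You assert that ``the étale site of $X^{\mathrm{sh}}$ is trivial, so the stalk at the closed point is an equivalence $\Sh((X^{\mathrm{sh}})_\et,\Lambda)\simeq\Mod_\Lambda$''. This is false whenever $X^{\mathrm{sh}}$ is not the spectrum of a separably closed field. It is true that every étale cover of $X^{\mathrm{sh}}$ \emph{itself} admits a section, but this says nothing about covers of other objects of the site --- for instance covers of the punctured spectrum. Concretely, if $j\colon U\hookrightarrow X^{\mathrm{sh}}$ is the open complement of the closed point and $U\neq\varnothing$, then $j_!\Lambda_U$ is a non-constant object of $\Sh((X^{\mathrm{sh}})_\et,\Lambda)$. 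What \emph{is} true is that the constant sheaf functor $\Mod_\Lambda\to\Sh((X^{\mathrm{sh}})_\et,\Lambda)$ is fully faithful, and that every \emph{locally constant} sheaf on $X^{\mathrm{sh}}$ is constant; but the claim that the dualisable object $K|_{X^{\mathrm{sh}}}$ is locally constant is precisely the instance of the lemma you are trying to prove, so the argument is circular at this point.

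There is a secondary issue: the blanket continuity statement $\Sh((X^{\mathrm{sh}})_\et,\Lambda)\simeq\colim_i\Sh((U_i)_\et,\Lambda)$ does not hold for the full presentable categories. One only has continuity for constructible (or dualisable) objects, and even that requires work --- in the paper this is \Cref{contDcons}, whose proof in fact \emph{uses} the present lemma. Martini--Wolf avoid both issues by arguing internally to the $\infty$-topos rather than via strict henselisations; as you yourself conclude, citing \cite{martini_presentable_2022} is the correct move here.
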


  \begin{prop}
    \label{contDcons}
    Let $\Lambda$ be a commutative ring, let $R$ be a perfect derived $\Lambda$-algebra and let $X$ be a qcqs scheme. Assume that $X=\lim_{i\in I} X_i$ is a limit of qcqs schemes with affine transitions. Then the natural functor 
    $$\colim_i\Dd_\mathrm{cons}((X_i)_\et,R)\to \Dd_\mathrm{cons}(X_\et,R)$$ is an equivalence, where the colimit is taken in $\catinfty$.
  \end{prop}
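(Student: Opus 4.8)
The plan is to reduce this statement to the analogous continuity property for the full categories of étale hypersheaves $\Sh(X_\et,R)$, which is well-known (it is essentially the statement that étale cohomology commutes with limits of qcqs schemes with affine transitions, promoted to the level of $\infty$-categories — see for instance the continuity results in \cite{bachmannrigidity} or the general machinery of \cite{MR4296353}). First I would recall that by \Cref{rigidity} applied modulo $n$ we may assume $R$ is an honest flat $\Lambda$-algebra if we wish, but actually the cleanest route is to prove the statement directly. The functor in question is the restriction to constructible objects of the canonical functor $\colim_i\Sh((X_i)_\et,R)\to\Sh(X_\et,R)$; since each transition functor $f^*$ preserves constructible objects (as recalled in the remarks: $f^*$ preserves both the dualisability condition and constructible locally closed stratifications pull back), the colimit on the left is well-defined and the restricted functor exists.

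The key steps, in order, would be: \textbf{(i)} Prove essential surjectivity. Given a constructible sheaf $K$ on $X_\et$, there is by definition (after passing to an affine open of $X$, which reduces to an affine open of some $X_i$ by standard limit arguments) a finite stratification of $X$ by constructible locally closed subschemes $Y_\alpha$ with $K|_{Y_\alpha}$ dualisable. Using \cite[Tag 01Z6]{stacks-project}-type spreading-out, each $Y_\alpha$ descends to a constructible locally closed subscheme of some $X_{i}$, and compatibly so for a single index $i$ after refining; then by \Cref{duallocconst}, the dualisability of $K|_{Y_\alpha}$ is witnessed by an étale cover trivialising it to a perfect complex, and this too spreads out. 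Combining these descents gives a constructible sheaf on some $X_i$ whose pullback is $K$ — here one uses continuity of the full sheaf categories to lift $K$ itself, and the stratification data merely certifies that the lift can be taken constructible. \textbf{(ii)} Prove full faithfulness. For $K, L$ constructible on $X_i$, one must show $\colim_{j\geq i}\Map_{\Sh((X_j)_\et,R)}(K_j,L_j)\to\Map_{\Sh(X_\et,R)}(K_X,L_X)$ is an equivalence. By dévissage along the stratification (using the localisation/recollement triangles, which commute with $f^*$ and with filtered colimits) and by the internal-Hom adjunction, this reduces to the case where $K=R_X$ is the unit and $L$ is dualisable, i.e. to showing that $\colim_j \Gamma(X_j, L_j)\to\Gamma(X,L_X)$ is an equivalence for $L$ dualisable; and a dualisable object is étale-locally a perfect complex, so this follows from continuity of étale cohomology with coefficients in a perfect complex over a qcqs scheme.

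\textbf{The main obstacle} I anticipate is step (i): one needs to check carefully that a constructible stratification on the limit scheme $X$, together with the étale covers trivialising the restrictions, can \emph{simultaneously} be descended to a single finite level $X_i$ — the stratification pieces, their being locally closed and constructible, the étale trivialising covers, and the identification of the pulled-back perfect complexes all have to be spread out coherently. This is a matter of assembling several applications of the standard ``limits of schemes'' toolkit (\cite[Tags 01ZM, 07RG, 0EY1]{stacks-project} and the continuity of $\Sh(-_\et,R)$), and the bookkeeping — rather than any conceptual difficulty — is where the real work lies. A secondary point to handle with care is that the colimit on the left is taken in $\catinfty$ rather than in presentable categories, so one should phrase full faithfulness and essential surjectivity intrinsically (via mapping spaces and objectwise lifting) rather than by appealing to a left-adjoint criterion.
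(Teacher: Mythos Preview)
Your outline has the right skeleton (dévissage for full faithfulness, spreading out for essential surjectivity), but there are two places where it does not close.

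\textbf{Full faithfulness and the hypothesis on $R$.} You never invoke the assumption that $R$ is a \emph{perfect} derived $\Lambda$-algebra, and this is not decorative. After reducing to $K=R$ and $L$ dualisable, you appeal to ``continuity of étale cohomology with coefficients in a perfect complex''. But the classical continuity theorem requires the coefficient complex to be bounded as a complex of abelian sheaves, and a dualisable $R$-sheaf is étale-locally a perfect $R$-complex; if $R$ is an unbounded derived ring this need not be bounded over $\Z$ (or over $\Lambda$). The paper uses perfectness of $R$ over the ordinary ring $\Lambda$ exactly here: perfect $R$-modules are then perfect $\Lambda$-modules, so $L$ viewed in $\Sh((X_{i_0})_\et,\Lambda)$ is constructible and in particular bounded, after which one can dévisser to a single degree and invoke the classical statement. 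Without this step your reduction to ``étale cohomology of a perfect complex'' does not terminate.

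\textbf{Essential surjectivity and the nonexistent ``continuity of the full sheaf categories''.} Your plan is to spread out the stratification and the trivialising étale covers, and then ``use continuity of the full sheaf categories to lift $K$ itself''. There is no such continuity statement: the functor $\colim_i\Sh((X_i)_\et,R)\to\Sh(X_\et,R)$ (colimit in $\catinfty$) is not an equivalence, and nothing of this sort is available to produce a lift of $K$ out of thin air. Spreading out the stratification and the covers gives you auxiliary geometric data over some $X_i$, but not the sheaf. The paper proceeds differently: after reducing by dévissage to $K$ dualisable, it spreads out the trivialising cover $f\colon U\to X$ to some $f^i\colon U_i\to X_i$, and then uses the \emph{already-proved full faithfulness} to lift the entire simplicial object $((f_n)_\sharp P)_{n\in\Delta}$ to a simplicial object $((f^i_n)_\sharp P)_{n\in\Delta}$ over $X_i$. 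Its colimit $L$ in $\Sh((X_i)_\et,R)$ satisfies $(f^i)^*L\simeq P$, hence is dualisable, and pulls back to $K$ because $p_i^*$ commutes with colimits. The mechanism that actually produces the lift is full faithfulness applied to a finite diagram, not an appeal to continuity of the big categories; this is the missing ingredient in your sketch.
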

  \begin{proof}
    This is an adaptation of \cite[Proposition 5.10]{MR4278670}.
    Up to writing each $X_i$ as a limit of Noetherian schemes with affine transition, we can assume the $X_i$ to be Noetherian.
    We first show that the functor is fully faithful. Let $i_0\in I$. It is enough to prove that for any two complexes $K$ and $L$ in $\Dd_{\mathrm{cons}}((X_{i_0})_\et,R)$ the map 
    $$\colim_{i\geqslant i_0}\Map_{\Dd_\mathrm{cons}((X_i)_\et,R)}(K_{\mid X_i},L_{\mid X_i})\to\Map_{\Dd_{\mathrm{cons}}(X,\Lambda)}(K_{\mid X},L_\mathrm{\mid X})$$ is an equivalence. By dévissage and localisation, we may assume that $K$ is dualisable and then reduce to the case where $K=R$. By adjunction, we thus have to show that the map 
    $$\colim_{i\geqslant i_0}\mathrm{Map}_{\mathrm{Mod}_{\Lambda}}(\Lambda,\mathrm{R}\Gamma(X_i,L))\to\Map_{\mathrm{Mod}_{\Lambda}}(\Lambda,\mathrm{R}\Gamma(X,L))$$ is an equivalence. Now, as perfect $R$-modules are perfect $\Lambda$-modules because $R$ is perfect over $\Lambda$, the object $L$, seen as an object of $\Sh((X_0)_\et,\Lambda)$ is constructible, in particular bounded so that up to a dévissage, it is concentrated in one degree. The result then follows from the commutation of étale cohomology with colimits over the $(X_i)$. 

    For the essential surjectivity, by dévissage it is enough to prove that any dualisable object lies in the image. 
    By \Cref{duallocconst}, if $K$ is a dualisable object of $\Sh(X_\et,R)$ there exists an étale covering $f\colon U\to X$ and a perfect complex of $R$-modules $P$ such that $f^*(K)$ is the constant sheaf with value $P$. 
    By Zariski descent we may even assume that $X$ is affine and up to cover the source of $f$ by an affine Zariski cover, we may also assume that the source of $f$ is affine. Then by spreading out (\cite[Théorème 8.10.5, Proposition 17.7.8]{ega4}) the morphism $f$ is the pullback of some étale $f^i$ with target $X_i$. Let $(f_n)$ and $(f^i_n)$ be the \v{C}ech nerves of $f$ and $f^i$. As the image of $(f_n^i)_\sharp P$ in $\Dd_\mathrm{cons}(X,R)$ is $(f_n)_\sharp P$, fully faithfulness ensures that the simplicial object $((f_n)_\sharp P)_{n\in\Delta}$ is the image of a simplicial object $((f_n^i)_\sharp P)_{n\in\Delta}$ in $\Dd_\mathrm{cons}(X_i,R)$. By étale descent, we have that $K = \colim_n (f_n)_\sharp P$ in $\Dd_\mathrm{cons}(X,R)$. Moreover if we denote by $L = \colim_n (f_n^i)_\sharp P$ in $\Sh((X_i)_\et,R)$, we have that $(f^i)^*L = P$ hence $L$ is dualisable. As the pullback functor $p_i^*\colon \Sh((X_i)_\et,R)\to\Sh(X_\et,R)$ commutes with colimits, we obtain that $K$ is the image of $L$, which is a constructible object over $X_i$, finishing the proof.

    % As $P$ is still perfect as a complex of $\Lambda$-modules, the sheaf $K$ is still dualisable as an object of $\Sh(X_\et,\Lambda)$, thus \Cref{dirfactorAB} reduces to $\Lambda$-coefficients. 
    % In $\Dd_\mathrm{cons}(X_\et,\Lambda)$ our object $K$ is the geometric realisation of $(f^n_\sharp P)$ with $(f^n)$ the \v{C}ech nerve of $f$, which is finite because $K$ is bounded. Thus it suffices to reach an object of the form $f_\sharp P$ with $f$ étale. By Zariski descent we may even assume that $X$ is affine and that the source of $f$ is affine. By spreading out (\cite[Théorème 8.10.5, Proposition 17.7.8]{ega4}) the morphism $f$ is the pullback of some étale $f^i$ with target $X_i$, thus $f_\sharp P \simeq (f^i_\sharp P)_{\mid X}$ is indeed in the image.
  \end{proof}

  \begin{thm}(Continuity)\label{continuitygmDM} Let $\Lambda$ be a commutative ring. Consider a qcqs finite-dimensional scheme $X$ that is the limit of a projective system of qcqs finite-dimensional schemes $(X_i)$ with affine transition maps.
    Then, the natural map
    $$\colim \hspace{1mm}\DMgm(X_i,\Lambda) \to \DMgm(X,\Lambda)$$
    is an equivalence. 
  \end{thm}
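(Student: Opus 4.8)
The plan is to deduce continuity for $\DMgm$ from the already-established rational and torsion inputs by a standard gluing-of-colimits argument, following the blueprint of \Cref{contDcons} but now working inside $\DM$ rather than with étale sheaves. First I would reduce to the case where each $X_i$ is Noetherian (writing each as a further limit of Noetherian schemes with affine transitions), and I would fix an index $i_0$ and set $\DMgm(X_{i_0},\Lambda)$ as the base over which all the relevant categories are indexed.

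\medskip

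For full faithfulness, it suffices to show that for $K, L \in \DMgm(X_{i_0},\Lambda)$ the canonical map
$$\colim_{i\geqslant i_0}\Map_{\DMgm(X_i,\Lambda)}(K_{\mid X_i},L_{\mid X_i})\to\Map_{\DMgm(X,\Lambda)}(K_{\mid X},L_{\mid X})$$
is an equivalence. Since geometric objects are generated under $f_\sharp$, $f^*$, shifts, twists and finite colimits from units, and since $f^*$ commutes with all the colimits in sight while the $f_\sharp$-$f^*$ adjunction lets one move $f_\sharp$ out of the left variable, I would reduce to $K=\Lambda_{X_{i_0}}$, i.e.\ to showing $\colim_i \Map(\Lambda_{X_i}, L_{\mid X_i}) \to \Map(\Lambda_X, L_{\mid X})$ is an equivalence, that is, continuity of the (geometric) cohomology of a fixed geometric $L$. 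Here is where I would use the arithmetic fracture square: the map on $\Q$-coefficients is handled by \Cref{chgbasratDM} together with the usual continuity of rational étale motivic cohomology over qcqs finite-dimensional schemes (as in \cite[Theorem 5.4.10]{MR3477640}, using the bounded cohomological dimension input), while the map on $\Z/n\Z$-coefficients is identified via \Cref{rigidity} with continuity for constructible étale sheaves, which is exactly \Cref{contDcons} applied to $R=\Lambda\otimes^L_\Z\Z/n\Z$. Assembling the pullback square relating $M$, $M\otimes\Q$, $M^{\wedge}$ and $(M\otimes\Q)^\wedge$ and passing to the (filtered, hence exact) colimit then gives full faithfulness; one must be a little careful that filtered colimits commute with the relevant limits, which is fine since the fracture square is a finite limit of spectra after a connective truncation / the profinite completion is a limit that interacts well with filtered colimits in this bounded situation.

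\medskip

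For essential surjectivity one observes that the image of $\colim_i \DMgm(X_i,\Lambda) \to \DMgm(X,\Lambda)$ is a thick subcategory (full faithfulness makes the source idempotent-complete, or one passes to idempotent completions throughout), so it is enough to hit a set of generators of $\DMgm(X,\Lambda)$ as a thick subcategory, namely the $f_\sharp(\Lambda_Y)(n)$ for $f\colon Y\to X$ smooth. Any such $f$ is, by spreading out (\cite[Théorème 8.8.2, 8.10.5]{ega4}), the pullback of a smooth $f^i\colon Y_i\to X_i$ for some $i$, and $f_\sharp(\Lambda_Y)(n)$ is then the image of $f^i_\sharp(\Lambda_{Y_i})(n)\in\DMgm(X_i,\Lambda)$ because $f_\sharp$ and Tate twists are compatible with the base change along $X\to X_i$. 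This finishes the argument.

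\medskip

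The main obstacle I anticipate is the full-faithfulness step, specifically making the fracture-square reduction rigorous: one needs the arithmetic fracture square $M \simeq M_\Q \times_{(M^\wedge)_\Q} M^\wedge$ to hold for geometric $M$ (or at least for the relevant mapping spectra), one needs to know that the torsion part is controlled degreewise — which is why reducing to a bounded/one-degree situation via dévissage, as in \Cref{contDcons}, matters — and one needs filtered colimits of spectra to commute with this particular finite limit, which forces some boundedness bookkeeping. Everything else (spreading out, $f^*$ commuting with colimits, the thick-subcategory/generation arguments) is routine. An alternative, perhaps cleaner, packaging would be to prove continuity for $\DMc$ first by combining \Cref{chgbasratDM}, \Cref{rigidity} and \Cref{contDcons} verbatim, and then transfer along the (eventual) identification $\DMgm=\DMc$ for ind-regular $\Lambda$ — but since that identification is itself downstream of continuity, I would keep the direct fracture-square proof for $\DMgm$ as above.
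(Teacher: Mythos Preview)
Your overall architecture matches the paper's: reduce to Noetherian $X_i$, split full faithfulness into rational and mod-$p$ pieces, and get essential surjectivity by spreading out the generators $f_\sharp(\Lambda_Y)(n)$. The essential surjectivity argument is essentially the same (the paper adds a preliminary Zariski reduction to affine $X$ and affine $Y$ before invoking EGA~IV, but this is cosmetic).

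Where you diverge is in the full-faithfulness step, and the divergence is exactly the ``main obstacle'' you flag. The paper does \emph{not} use an arithmetic fracture square. Instead it uses the elementary fact that a map of spectra is an equivalence if and only if it is one after $-\otimes\Q$ and after $-\otimes^L\Z/p\Z$ for every prime $p$; this needs no profinite completion and no interchange of filtered colimits with infinite limits. Your worry that ``the profinite completion is a limit that interacts well with filtered colimits in this bounded situation'' is a genuine gap as written --- there is no boundedness available for the mapping spectra in $\DM$ --- and the paper's route simply avoids it.

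The rational branch is also handled differently. Rather than reducing to $K=\Lambda_X$ and invoking continuity of rational motivic cohomology, the paper observes that after $-\otimes\Q$ the geometric objects are \emph{compact} in $\DM(X_i,\Lambda\otimes\Q)$ (this is \cite[Lemma~5.1(i)]{MR4319065}), so full faithfulness follows from continuity of the presentable categories $\DM(-,\Lambda\otimes\Q)$ as in \cite[Lemma~2.5.11]{MR4466640}. For the mod-$p$ branch the paper does what you suggest --- rigidity plus \Cref{contDcons} --- but notes one extra point you should make explicit: to apply \Cref{contDcons} one needs geometric objects to be constructible after $-\otimes^L\Z/p\Z$, which is pulled from \cite[Theorem~6.3.26]{MR3477640} (over $\Z/p\Z$) and then tensored up to $\Lambda\otimes^L\Z/p\Z$.

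In short: replace your fracture-square assembly by the conservativity of $\{-\otimes\Q,\,-\otimes^L\Z/p\Z\}_p$ on spectra, and in the rational branch use compactness rather than a direct cohomology computation; then the obstacle you anticipate disappears and the proof goes through.
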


  \begin{proof}
    We can again assume that each $X_i$ is Noetherian.
    Fully faithfulness can be checked after tensorisation of the mapping spectra by $\Q$ and by $\Z/p\Z$. By \Cref{chgbasratDM}, the $-\otimes\Q$ passes inside the mapping spectra, and then our objects become compact by \cite[Lemma 5.1 (i)]{MR4319065} and this follows from continuity of the presentable category as in \cite[Lemma 2.5.11]{MR4466640}. Modding out by $p$ passes in the mapping spectra; hence, noting that $\Lambda\otimes_\Z \Z/p\Z$ is a perfect $\Lambda$-algebra, the result will then follow from the rigidity theorem in the form of \Cref{rigidity} and \Cref{contDcons}, once we know that geometric objects are constructible with torsion coefficients. This last assertion is true over $\Z/p\Z$ by \cite[Theorem 6.3.26]{MR3477640}; as the functor $-\otimes_\Z\Lambda$ preserves constructible étale sheaves and as the generators of $\DMgm(X,\Lambda\otimes_\Z\Z/p\Z)$ are of the form $M\otimes_\Z\Lambda$ for some $M$ in $\DMgm(X,\Z/p\Z)$, the result is true with $\Lambda\otimes_\Z \Z/p\Z$-coefficients. 
    
    To prove the essential surjectivity, first using that the \v{C}ech nerve of a covering coming from a finite open covering is finite, we can reduce to $X$ affine using Zariski descent. Then Zariski descent again (but this time on the functor that sends $g\colon Y\to X$ smooth to $g_\sharp g^*\Lambda_X$) implies that it suffices to reach $f_\sharp(\Lambda_Y)(n)$ with $Y$ and $X$ affine and $f\colon Y\to X$ smooth. The usual spreading out properties of \cite[Théorème 8.10.5, Proposition 17.7.8]{ega4} then give the result.
  \end{proof}

  \begin{cor}
    \label{PFproper}
    Let $\Lambda$ be a commutative ring and let $X$ be a finite-dimensional qcqs scheme.
    \begin{enumerate}
      \item Let $f\colon Y\to X$ be a separated finite type morphism. Then the functor $f_!$ restricts to a functor $$f_!\colon\DMgm(Y,\Lambda)\to\DMgm(X,\Lambda)$$ between geometric motives.
      \item The $\infty$-category $\DMgm(X,\Lambda)$ is the thick subcategory of $\DM(X,\Lambda)$ generated by the $p_*(\Lambda_Y)(n)$ for $p\colon Y\to X$ projective and $n\in\Z$.
      \item The $\infty$-category $\DM(X,\Lambda)$ is the localising subcategory of itself generated by the $p_*(\Lambda_Y)(n)$ for $p\colon Y\to X$ projective and $n\in\Z$.
    \end{enumerate}
  \end{cor}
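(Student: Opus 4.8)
\emph{Part (1).} The plan is to show that the class $\mathcal{E}$ of morphisms $f$ of qcqs finite-dimensional schemes for which $f_!$ preserves geometric motives is closed under composition (immediate from $(g\circ f)_!\simeq g_!\circ f_!$) and contains all separated morphisms of finite type. First, $\mathcal{E}$ contains open immersions, since $j_!\simeq j_\sharp$ and $f_\sharp$ preserves geometric motives; closed immersions, since $i_!\simeq i_*$ and $i_*$ preserves geometric motives (as already noted); and smooth morphisms, since relative purity identifies $f_!$ with $f_\sharp$ up to a Tate twist and a shift (reducing to constant relative dimension by decomposing the source). Hence $\mathcal{E}$ contains projective morphisms, as $p\colon Y\to X$ projective factors as a closed immersion followed by the smooth projection $\mathbb{P}^n_X\to X$; in particular $p_*(\Lambda_Y)=p_!(\Lambda_Y)$ is geometric whenever $Y$ is. To pass from projective to proper, I would use Chow's lemma: given $\bar f\colon\bar Y\to X$ proper and $M\in\DMgm(\bar Y,\Lambda)$, it supplies a projective $\pi\colon\bar Y'\to\bar Y$ with $\bar f\circ\pi$ projective which is an isomorphism over a dense open $V\subseteq\bar Y$; writing $i\colon Z\hookrightarrow\bar Y$ for the reduced complement and $p_E\colon E\to Z$ for the base change of $\pi$, the blow-up gluing square (``proper cdh descent'', a formal consequence of localization and proper base change) exhibits $M$ as a finite limit built from $\pi_*\pi^*M$, $i_*i^*M$ and $i_*p_{E*}p_E^*i^*M$. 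Applying $\bar f_*$ and inducting on $\dim\bar Y$ — both $\dim Z$ and $\dim E$ are strictly smaller — reduces to $\bar f\circ\pi$ (projective, already treated) and to $\bar f\circ i$, $\bar f\circ i\circ p_E$ (proper with smaller-dimensional source). Finally Nagata compactification writes any separated morphism of finite type as an open immersion followed by a proper one. To apply Chow's lemma and Nagata compactification comfortably I would first reduce to $X$ Noetherian via \Cref{continuitygmDM} (spreading out $f$); the general case follows because $f_!$ commutes with base change and geometric motives over $X$ are pulled back from geometric motives over some Noetherian approximation by \Cref{continuitygmDM}.

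\emph{Part (2).} Let $\mathcal{G}\subseteq\DM(X,\Lambda)$ be the thick subcategory generated by the $p_*(\Lambda_Y)(n)$ for $p\colon Y\to X$ projective. By the remarks in the proof of (1), each generator is geometric, so $\mathcal{G}\subseteq\DMgm(X,\Lambda)$. For the converse, $\DMgm(X,\Lambda)$ is the thick subcategory generated by the $f_\sharp(\Lambda_Y)(n)$ with $f\colon Y\to X$ smooth; by relative purity such an object is a twist and shift of $f_!(\Lambda_Y)$, so it suffices to prove $f_!(\Lambda_Y)\in\mathcal{G}$. Compactify $f=\bar f\circ j$ with $j$ an open immersion and $\bar f$ proper; then $f_!(\Lambda_Y)\simeq\bar f_*(j_!\Lambda_Y)$, and the localization triangle $j_!\Lambda_Y\to\Lambda_{\bar Y}\to i_*\Lambda_{\partial Y}$ (with $\partial Y=\bar Y\setminus Y$) shows that $f_!(\Lambda_Y)$ is built from $\bar f_*(\Lambda_{\bar Y})$ and $(\bar f|_{\partial Y})_*(\Lambda_{\partial Y})$, i.e. pushforwards of units along proper morphisms. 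That these lie in $\mathcal{G}$ is exactly the Chow's lemma and proper cdh descent argument of (1), run with $M=\Lambda_{\bar Y}$ and inducting on dimension, the projective case being a generator of $\mathcal{G}$; as before one first reduces to $X$ Noetherian by \Cref{continuitygmDM}.

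\emph{Part (3).} This should be formal: the localizing subcategory generated by the $p_*(\Lambda_Y)(n)$ contains the thick subcategory they generate, which by (2) is $\DMgm(X,\Lambda)$, hence it contains the localizing subcategory generated by $\DMgm(X,\Lambda)$. But the objects $f_\sharp(\Lambda_Y)(n)$ for $f$ smooth are geometric and generate $\DM(X,\Lambda)$ under colimits by construction, so that localizing subcategory is all of $\DM(X,\Lambda)$.

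\emph{Main obstacle.} I expect the crux to be the reduction from proper to projective morphisms: one needs the blow-up gluing square available in $\DM(X,\Lambda)$ with arbitrary (in particular non-torsion) coefficients — which it is, being formal from localization and proper base change — and one must set up the induction on dimension so that both the center $Z$ and the exceptional locus $E$ of the Chow modification drop dimension, while checking that the intermediate morphisms $\bar f\circ i$ and $\bar f\circ i\circ p_E$ remain proper (and, for the base case, that finite morphisms are projective). The remaining ingredients — Nagata compactification, the purity identification of $f_!$ with $f_\sharp$, and the handling of non-Noetherian bases via \Cref{continuitygmDM} — are routine.
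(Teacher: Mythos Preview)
Your argument is correct and essentially self-contained, but it differs from the paper's route. After the same continuity reduction to the Noetherian case, the paper does not run a Chow's-lemma/cdh induction at all: for (1) it simply quotes \cite[Proposition 4.2.11]{MR3971240} to know that proper pushforward preserves geometric objects over Noetherian bases, and for (2) it quotes \cite[Lemme 2.2.23]{MR2423375} for the generation statement; the non-Noetherian case is then read off by proper base change. Your approach reproves those two Noetherian inputs directly via the blow-up square and an induction on the dimension of the source, which is more explicit and avoids the black boxes at the cost of length. One small simplification you missed: in your cdh step the morphism $\bar f\circ i\circ p_E$ is already projective (it is the restriction of the projective $\bar f\circ\pi$ to the closed subscheme $E\subseteq\bar Y'$), so you never need the inequality $\dim E<\dim\bar Y$; only $\dim Z<\dim\bar Y$ is used in the induction, and that one is unproblematic since $Z$ is the complement of the dense open $V$ in a Noetherian scheme. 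Part (3) is handled identically in both approaches.
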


  \begin{proof}
    For the first point, assume first that $f=p$ is proper.
    Let $M$ be an object of $\DMgm(Y,\Lambda)$, and write $X=\lim X_i$ as a limit of schemes that are of finite type over the integers. By \cite[Théorème 8.10.5.(xii)]{ega4} one can find a $p_{i_0} \colon Y_{i_0}\to X_{i_0}$ proper such that the pullback to $X$ is $p$. In particular $$Y = \lim_{i\geqslant i_0}Y_{i_0}\times_{X_{i_0}}X_i$$ and there is some $i$ such that $M$ is the pull back of a motive $N$ over $Y_i$ by continuity. Then by \cite[Proposition 4.2.11]{MR3971240} the object $(p_i)_*M_i$ is geometric in $\DM(X_i,\Lambda)$. Thus, its pullback to $X$ is geometric, but by the proper base change theorem this is $p_*M$. Finally, if $f$ is separated of finite type, choosing a Nagata compactification of $f$ yields the result.

    Now for the second point, write $X = \lim_i X_i$ as a limit of schemes that are of finite type over the integers. Let $M$ be an object of $\DMgm(X,\Lambda)$, that comes by continuity above from an object $M_i$ of $\DMgm(X_i,\Lambda)$ for some index $i$. Thanks to \cite[Lemme 2.2.23]{MR2423375} we know that the result is true for $\DMgm(X_i,\Lambda)$, so that there exists a finite diagram 
    $$F\colon J\to\DMgm(X_i,\Lambda)$$ such that $F(j)$ is of the form 
    $p^j_*\Lambda(n_j)[m_j]$ for $p^j$ projective and $m_j$ and $n_j$ integers, and such that $M_i$ is a direct factor of $\colim_J F$. Thus, the object $M$ is a direct factor of $(\colim_J F)_{\mid X}$, which by proper base change is the colimit of the objects $q^j_*\Lambda(n_j)[m_j]$, with $q^j$ the base change of $p^j$ to $X$. This finishes the proof, as the third point readily follows from the second one.
  \end{proof}

  As a consequence of continuity, we see that geometric motives are constructible.

  \begin{cor}\label{geoareconsDM}
    Let $\Lambda$ be a commutative ring. Geometric motives with coefficients $\Lambda$ are constructible. 
   \end{cor}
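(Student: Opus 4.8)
The plan is to use Zariski descent and continuity to reduce to schemes of finite type over $\Z$, and then to invoke Cisinski--Déglise's study of constructible motives.

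First I would observe that whether a motive on $X$ is constructible can be tested on an affine open cover of $X$, and that pullback preserves constructible motives (second Remark above); the statement is therefore Zariski-local on $X$, so I may assume $X$ affine --- and, as in the rest of the paper, qcqs and finite-dimensional. I would then write $X=\lim_i X_i$ as a cofiltered limit, with affine transition maps, of affine schemes $X_i$ of finite type over $\Z$, each of which is Noetherian, excellent and of finite Krull dimension. Given a geometric motive $M$ over $X$, continuity (\Cref{continuitygmDM}) provides an index $i$ and a geometric motive $M_i$ over $X_i$ whose pullback along $X\to X_i$ is $M$; since pullback preserves constructible motives, it suffices to show that $M_i$ is constructible. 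This reduces the statement to the case of a scheme $S$ of finite type over $\Z$.

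Over such an $S$, the inclusion $\DMgm(S,\Lambda)\subseteq\DMc(S,\Lambda)$ is due to Cisinski--Déglise: for torsion coefficients it is \cite[Theorem 6.3.26]{MR3477640} (already used in the proof of \Cref{continuitygmDM}); for rational coefficients it rests on the existence of resolutions of singularities up to alterations over $S$, which makes the generators $f_\sharp\Q_Y(n)$ dualisable on the strata of a suitable stratification; and the case of an arbitrary commutative ring $\Lambda$ follows from these two via the arithmetic fracture square, using that $-\otimes_\Z\Lambda$ preserves constructible motives and that the generators of $\DMgm(S,\Lambda)$ lie in its essential image.

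I expect the main obstacle to be the base case --- schemes of finite type over $\Z$ --- and, inside it, the behaviour of $p$-torsion coefficients at primes $p$ that are not invertible on $S$; controlling exactly this is the purpose of the theory of constructible motives developed in \cite{MR3477640}. Everything else is formal, relying only on Zariski descent and \Cref{continuitygmDM}.
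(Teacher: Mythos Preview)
Your reduction via continuity to Noetherian schemes and the observation that the generators of $\DMgm(S,\Lambda)$ lie in the image of $-\otimes_\Z\Lambda$ are exactly what the paper uses; the preliminary Zariski localisation is harmless but unnecessary, since absolute Noetherian approximation already works for any qcqs scheme.

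Where your proposal diverges is in the Noetherian base case. You split into rational and torsion coefficients and then invoke an ``arithmetic fracture square'' to pass to arbitrary $\Lambda$, but this step is not justified as written: knowing that $M\otimes\Q$ and $M/p$ are constructible for every prime $p$ does not force $M$ itself to be constructible (over a separably closed point, take $M=\bigoplus_p\Z/p[n_p]$ with the $n_p$ unbounded---its rationalisation vanishes and each mod-$p$ reduction is perfect, yet $M$ is not perfect over $\Z$). Since constructibility of the geometric $M$ is precisely what you are trying to establish, you cannot close this gap by appealing to properties of constructible objects. What would make such an argument go through is that the stratification produced by alterations is geometric and hence independent of the coefficient ring---but then one may as well run it directly with $\Z$ coefficients.

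That is precisely the paper's route. \cite[Theorem~6.3.26]{MR3477640} is not a torsion-coefficients statement; it already gives $\DMgm(X_i,\Z)\subset\DMc(X_i,\Z)$ over any Noetherian $X_i$. The paper cites it once for $\Lambda=\Z$, tensors with $\Lambda$ (exactly your final clause), and pulls back along $X\to X_i$. No fracture argument is needed.
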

   \begin{proof}
    Let $X$ a qcqs scheme.
Write $X$ as a limit of Noetherian finite-dimensional schemes $X_i$ with affine transition maps. Any geometric motive with coefficients $\Z$, over $X_i$ is constructible using \cite[Theorem 6.3.26]{MR3477640} then this is also true with coefficients $\Lambda$ by tensoring by $\Lambda$ over $\Z$. If $M$ is a geometric object, it is the pullback of some $M_i$ over some $X_i$. As the motive $M_i$ is constructible, so is $M$. 
   \end{proof}

We now prove the converse when $X$ is $\Lambda$-finite\footnote{See the beginning of \cite[Section 5]{MR4319065} for the precise definition. This hypothesis implies that $\DM(X,\Lambda)$ is compactly generated and any finite dimensional scheme is $\Lambda$-finite if $\Lambda$ is a $\Q$-algebra.} by linking both notions to compactness: 
\begin{prop}\label{conscompDM} Let $\Lambda$ be a commutative ring. Let $X$ be a qcqs $\Lambda$-finite scheme and let $M$ be an object of $\DM(X,\Lambda)$. Then, the following are equivalent:
  \begin{enumerate}
    \item $M$ is of geometric origin.
    \item $M$ is constructible.
    %\item $M$ is locally of geometric origin.
    \item The functor $\underline{\Hom}(M,-)$ is compatible with colimits.
    \item $M$ is a compact object of $\DM(X,\Lambda)$.
  \end{enumerate}
\end{prop}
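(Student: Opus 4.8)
The plan is to prove the four equivalences by a cycle, exploiting the three earlier results: continuity (\Cref{continuitygmDM}, giving $\DMgm$ commutes with cofiltered limits of schemes), the fact that geometric motives are constructible (\Cref{geoareconsDM}), and the compatibility with rationalisation (\Cref{chgbasratDM}). The implication $(1)\Rightarrow(2)$ is exactly \Cref{geoareconsDM}, so nothing new is needed there. For $(4)\Rightarrow(3)$, recall that since $X$ is $\Lambda$-finite the category $\DM(X,\Lambda)$ is compactly generated, the unit $\Lambda_X$ is compact, and $\underline{\Hom}(M,-)$ is right adjoint to $-\otimes M$; if $M$ is compact then one checks that $\Map(\Lambda_X, \underline{\Hom}(M,-)) = \Map(M,-)$ commutes with colimits, and more generally $\Map(N\otimes M, -)$ does for $N$ running over a set of compact generators, whence $\underline{\Hom}(M,-)$ commutes with colimits. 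The implication $(3)\Rightarrow(4)$ is similar: from $\underline{\Hom}(M,-)$ colimit-preserving one gets $\Map(M,-) = \Map(\Lambda_X,\underline{\Hom}(M,-))$ colimit-preserving because $\Lambda_X$ is compact, i.e. $M$ is compact.

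The substantive implications are $(2)\Rightarrow(1)$ and the interplay needed to land inside the cycle, so I would route the argument as $(2)\Rightarrow(4)$ and $(4)\Rightarrow(1)$ (or equivalently $(2)\Rightarrow(1)$ directly), using the other two arrows as the easy formal part. For $(2)\Rightarrow(4)$: constructibility is Zariski-local and stable under the relevant dévissage, so after stratifying we reduce to $M$ dualisable; a dualisable object $M$ satisfies $\underline{\Hom}(M,-) = M^\vee\otimes(-)$, which preserves colimits, and by the already-established $(3)\Rightarrow(4)$ it is compact — then one propagates compactness back up the localisation/gluing triangles of the stratification, using that $j_\sharp$, $i_*$ and finite colimits preserve compact objects, and that over a $\Lambda$-finite scheme each stratum is again $\Lambda$-finite so the inductive hypothesis applies. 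For $(4)\Rightarrow(1)$, the key input is continuity together with \Cref{PFproper}: write $X = \lim_i X_i$ with $X_i$ of finite type over $\Z$ (hence Noetherian), so $\DM(X,\Lambda) = \colim_i \DM(X_i,\Lambda)$ as presentable categories and compact objects of the colimit come from compact objects of some $\DM(X_i,\Lambda)$; over the Noetherian base $X_i$ the compact objects coincide with the geometric ones by Cisinski–Déglise, and pullback sends geometric to geometric, so $M$ is geometric.

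The compatibility with rationalisation (\Cref{chgbasratDM}) enters precisely to make the reduction to the Noetherian case safe when $\Lambda$ is not assumed regular or a $\Q$-algebra: one checks compactness by testing separately after $-\otimes\Q$ and after $-\otimes\Z/p\Z$, exactly as in the proof of \Cref{continuitygmDM}, so that the ``compact $\Leftrightarrow$ geometric'' identification over Noetherian schemes can be invoked rationally (where it is classical) and torsion-adically (via rigidity) and then reassembled. I expect the main obstacle to be the bookkeeping in $(2)\Rightarrow(4)$: propagating compactness along a stratification requires knowing that $i^!$ applied to the relevant objects behaves well and that the strata remain $\Lambda$-finite, and one must be careful that the recollement triangles one uses only involve functors ($j_!=j_\sharp$, $i_*$, $i^*$, $j^*$) that are known to preserve compact constructible objects on a $\Lambda$-finite scheme; handling the non-Noetherian $X$ here, before continuity is available for $\DMc$, is where the argument is most delicate and where I would lean on \Cref{etlocDM} and \Cref{chgbasratDM} to descend the needed finiteness statements.
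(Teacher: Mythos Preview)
Your cycle is correct in spirit, but you are working much harder than necessary, and the invocation of \Cref{chgbasratDM} is spurious here. The paper's proof is very short because the equivalence $(1)\Leftrightarrow(4)$ is not proved from scratch: it is cited directly from \cite[Lemma 5.1 (i)]{MR4319065}, which already identifies compact objects with geometric ones on any $\Lambda$-finite scheme. Once that is in hand, $(3)\Leftrightarrow(4)$ is the formal adjunction argument you give, $(1)\Rightarrow(2)$ is \Cref{geoareconsDM}, and $(2)\Rightarrow(1)$ is a two-line d\'evissage: reduce to affine by Zariski hyperdescent of $\DMgm$ (\Cref{descentDM}), then by localisation reduce to $M$ dualisable, where $\underline{\Hom}(M,-)\simeq M^\vee\otimes(-)$ preserves colimits, so $(3)$ holds and hence $(1)$. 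No rationalisation, no continuity, no reduction to Noetherian bases is needed.

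Your route for $(4)\Rightarrow(1)$ via writing $X=\lim_i X_i$ with $X_i$ of finite type over $\Z$ and invoking continuity of the presentable category would work, but it is circular in flavour: the continuity argument in \Cref{continuitygmDM} itself uses \cite[Lemma 5.1 (i)]{MR4319065} to identify geometric with compact after rationalisation, so you may as well cite that lemma directly. Similarly, your worry about ``propagating compactness back up the stratification'' in $(2)\Rightarrow(4)$ is avoided in the paper by propagating \emph{geometricity} instead: $j_\sharp$ and $i_*$ preserve geometric objects (this was noted already), so the localisation triangle immediately gives that a motive whose restrictions to a locally closed partition are geometric is itself geometric. Your concern about whether strata remain $\Lambda$-finite, and the appeal to \Cref{chgbasratDM} to ``reassemble'' rational and torsion information, are both unnecessary for this proposition.
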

\begin{proof}
  The equivalence between geometric objects and compact objects is \cite[Lemma 5.1 (i)]{MR4319065}.
On the other hand, the formula 
  $$\Map(f_\sharp(\Lambda_Y)(n)\otimes M,-)=\Map(f_\sharp(\Lambda_Y)(n),\underline{\Hom}(M,-))$$
  for $f\colon Y\to X$ smooth and $n$ an integer, combined with the fact that $f_\sharp(\Lambda_Y)(n)$ is compact implies that conditions (3) and (4) are equivalent.

We proved in \Cref{geoareconsDM} that motives of geometric origin are constructible. We now prove the converse.  Let $M$ be constructible over $X$. 
We can assume that $X$ is affine by Zariski hyperdescent of geometric motives. By dévissage and localisation, we can further assume that $M$ is dualisable. In that case $\underline{\Hom}(M,-)$ is compatible with colimits which finishes the proof.
\end{proof}

From this, we deduce continuity for constructible motives. 
We begin with a general lemma:
\begin{lem}\label{bestlemmaDM}
  Let $\Lambda$ be a commutative ring and let $M$ be an object of $\DMc(X,\Lambda)$ with $X$ a qcqs finite-dimensional scheme. Then, there is a geometric motive $P$ as well as a map $P\to M\oplus M[1]$ whose cone $C$ is such that $C/n\simeq C\oplus C[1]$ for some integer $n$. 
\end{lem}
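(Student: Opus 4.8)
The plan is to dispose of the rational part of $M$ first, reducing the statement to a torsion phenomenon. Since $\Lambda\otimes\Q$ is a $\Q$-algebra, every finite-dimensional scheme is $(\Lambda\otimes\Q)$-finite; as the change-of-coefficients functor $-\otimes\Q$ is symmetric monoidal it carries the constructible motive $M$ to a constructible motive over $\Lambda\otimes\Q$, so \Cref{conscompDM} applied with coefficients $\Lambda\otimes\Q$ shows that $M\otimes\Q$ is geometric in $\DM(X,\Lambda\otimes\Q)$. I would then descend this geometricity along $-\otimes\Q$ and control the discrepancy.

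For the descent I would first show that $\DMgm(X,\Lambda\otimes\Q)$ is the idempotent completion of the essential image of $-\otimes\Q\colon\DMgm(X,\Lambda)\to\DM(X,\Lambda\otimes\Q)$. This image contains the generators $f_\sharp((\Lambda\otimes\Q)_Y)(m)$ and is already a stable subcategory: combining \Cref{chgbasratDM}(1) with the adjunction between $-\otimes\Q$ and restriction of scalars along the ring epimorphism $\Lambda\to\Lambda\otimes\Q$, one gets $\pi_0\Map_{\DM(X,\Lambda\otimes\Q)}(A\otimes\Q,B\otimes\Q)\simeq\pi_0\Map_{\DM(X,\Lambda)}(A,B)\otimes\Q$ for $A,B$ geometric over $\Lambda$, so that any map between two rationalised geometric $\Lambda$-motives agrees, after composition with a rational automorphism of its target, with the rationalisation of a map, hence has cofibre again the rationalisation of a geometric $\Lambda$-motive. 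Thus $M\otimes\Q$ is a retract of $P_0\otimes\Q$ for some $P_0$ geometric over $\Lambda$; writing $P_0\otimes\Q\simeq(M\otimes\Q)\oplus K$ and taking $e$ to be the projector of $P_0\otimes\Q$ onto $K$, the cofibre of $e$ is $(M\otimes\Q)\oplus(M\otimes\Q)[1]=(M\oplus M[1])\otimes\Q$, which therefore lies in the essential image as well: there is $P\in\DMgm(X,\Lambda)$ with $P\otimes\Q\simeq(M\oplus M[1])\otimes\Q$. This is where the extra summand $M[1]$ in the statement comes from. Using the identification of $\pi_0$ of mapping spaces above with target $N=M\oplus M[1]$, some integer multiple of this equivalence is the rationalisation of an honest map $g\colon P\to M\oplus M[1]$, and that multiple remains an equivalence rationally because $\DM(X,\Lambda\otimes\Q)$ is $\Q$-linear. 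Put $C=\mathrm{cofib}(g)$: then $C\otimes\Q\simeq\mathrm{cofib}(g\otimes\Q)=0$, and $C$ is constructible since $P$ is (\Cref{geoareconsDM}), $M\oplus M[1]$ is, and constructible motives form a stable subcategory.

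It then remains to check that a constructible motive $C$ with $C\otimes\Q=0$ satisfies $C/n\simeq C\oplus C[1]$ for some $n$; as $C/n=\mathrm{cofib}(n\cdot\mathrm{id}_C)$, it suffices that $n\cdot\mathrm{id}_C=0$ for some $n$, since then $C/n=\mathrm{cofib}(0)\simeq C\oplus C[1]$. The vanishing of $n\cdot\mathrm{id}_C$ can be tested on a finite affine open cover because $\DM(-,\Lambda)$ is a Zariski sheaf, so one reduces to $X$ affine; there, even though $X$ need not be Noetherian, spreading out a stratification witnessing the constructibility of $C$ to a Noetherian model (\cite{ega4}) produces a finite filtration $\emptyset=W_{-1}\subseteq W_0\subseteq\cdots\subseteq W_r=X$ by quasi-compact opens with each $W_k\setminus W_{k-1}$ a finite disjoint union of strata on which $C$ is dualisable. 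For a dualisable $\Lambda$-motive $D$ over a finite-dimensional scheme with $D\otimes\Q=0$, \Cref{chgbasratDM}(1) gives $\End(D)\otimes\Q\simeq\Map(D,D\otimes\Q)=0$, so $\mathrm{id}_D\in\pi_0\End(D)$ is torsion and $D$ is killed by an integer; hence so is $i_*i^*C$ at each step (closed pushforward being exact and $\Lambda$-linear), and, inductively, so is $j_!j^*C$. Since an extension of a motive killed by $a$ by one killed by $b$ is killed by $ab$, the localisation triangles for the filtration show that $C$ is killed by some $n$, finishing the proof.

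The main obstacle is the descent step: going from geometricity over $\Lambda\otimes\Q$ back to a genuine geometric $\Lambda$-motive, and turning the abstract retract into an actual map $P\to M\oplus M[1]$ with rationally trivial cone; the remaining dévissage is routine given \Cref{chgbasratDM} and \Cref{conscompDM}. Note that ind-regularity of $\Lambda$ plays no role here; it enters only afterwards, when one proves that such a $C$ is itself geometric using torsion coefficients.
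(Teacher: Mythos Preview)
Your proof is correct and follows essentially the same line as the paper's: identify $\DMgm(X,\Lambda\otimes\Q)$ with the idempotent completion of the rationalisation of $\DMgm(X,\Lambda)$, use the cofibre-of-the-complementary-idempotent trick to pass from a retract to $(M\oplus M[1])\otimes\Q$, lift a multiple of the resulting equivalence to a map $P\to M\oplus M[1]$, and then argue that the cone is annihilated by an integer.

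Two minor remarks. First, in your descent step you state the identification $\pi_0\Map_{\Lambda\otimes\Q}(A\otimes\Q,B\otimes\Q)\simeq\pi_0\Map_\Lambda(A,B)\otimes\Q$ only for $A,B$ geometric, but then apply it with target $M\oplus M[1]$, which is merely constructible; this is harmless since \Cref{chgbasratDM}(1) only requires the \emph{source} to be geometric (or constructible), so the identification holds with arbitrary target. Second, your torsion step is more elaborate than necessary: you already observed that $C$ is constructible, so \Cref{chgbasratDM}(1) applies directly with source and target equal to $C$, yielding $\End(C)\otimes\Q\simeq\Map(C,C\otimes\Q)=0$ and hence that $\mathrm{id}_C$ is torsion in $\pi_0\End(C)$ --- no Zariski reduction, stratification, or d\'evissage to dualisable pieces is needed. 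This is exactly how the paper handles it.
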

\begin{proof}
  Take $M$ a constructible motive. \Cref{chgbasratDM} implies that $\DMgm(X,\Lambda\otimes\Q)$ is the idempotent-completion in $\DM(X,\Lambda \otimes \Q)$ of the rationalisation of $\DMgm(X,\Lambda)$. The characterisation of the idempotent completion \cite[Recollection 2.13]{balmer-gallauer} as well as \Cref{chgbasratDM} yields that $(M\oplus M[1])\otimes \Q$ belongs to the rationalisation of $\DMgm(X,\Lambda)$. This gives a geometric motive $P$ and a map $P\rar M\oplus M[1]$ which becomes an isomorphism after tensoring with $\Q$. Let $C$ be the cone of this map, then $C\otimes \Q=0$. Using \Cref{chgbasratDM} once again, we see that the multiplication by $n$ seen as a map $C\to C$ vanishes for $n$ large enough. This together with the exact triangle $C\xrightarrow{\times n} C \to C/n$ implies that $C/n\simeq C\oplus C[1]$.  
\end{proof}
\begin{cor}\label{continuityconsDM}
    Let $\Lambda$ be a commutative ring. Consider a qcqs finite-dimensional scheme $X$ that is the limit of a projective system of qcqs finite-dimensional schemes $(X_i)$ with affine transition maps.
     Then, the natural map
     $$\colim_i \hspace{1mm}\DMc(X_i,\Lambda) \to \DMc(X,\Lambda)$$
     is an equivalence. 
\end{cor}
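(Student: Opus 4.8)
The plan is to reduce the statement about constructible motives to the already-established continuity for geometric motives (\Cref{continuitygmDM}) and for constructible étale sheaves (\Cref{contDcons}), using \Cref{bestlemmaDM} to glue the rational part and the torsion part together. As usual, up to writing each $X_i$ as a limit of Noetherian schemes with affine transitions, we may assume every $X_i$ is Noetherian.

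For fully faithfulness, fix $i_0$ and two constructible motives $K,L$ over $X_{i_0}$; we must show
$$\colim_{i\geqslant i_0}\Map_{\DMc(X_i,\Lambda)}(K_{\mid X_i},L_{\mid X_i})\to\Map_{\DMc(X,\Lambda)}(K_{\mid X},L_{\mid X})$$
is an equivalence. This can be checked after tensoring the mapping spectra with $\Q$ and after modding out by each prime $p$. Rationally, \Cref{chgbasratDM} moves $-\otimes\Q$ inside the mapping spectrum, turning this into a statement about $\DMgm(-,\Lambda\otimes\Q)$ (using that constructible motives become geometric rationally, by the idempotent-completion argument in the proof of \Cref{bestlemmaDM}), where it follows from continuity of the presentable category as in the proof of \Cref{continuitygmDM}. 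Modulo $p$, modding out passes into the mapping spectrum and $\Lambda\otimes_\Z\Z/p\Z$ is a perfect $\Lambda$-algebra, so the rigidity theorem \Cref{rigidity} identifies the relevant categories with constructible étale sheaves and the claim follows from \Cref{contDcons}. A standard argument then upgrades ``equivalence after $\otimes\Q$ and after $/p$ for all $p$'' to an equivalence of spectra (the fibre $F$ of the map satisfies $F\otimes\Q=0$ and $F/p=0$, hence $F$ is both a $\Q$-module — so uniquely divisible — and has $F=0$ after $/p$, forcing $F=0$; alternatively invoke the arithmetic fracture square, noting the relevant spectra are bounded by constructibility).

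For essential surjectivity, let $M$ be a constructible motive over $X$. Apply \Cref{bestlemmaDM}: there is a geometric motive $P$ over $X$ and a map $P\to M\oplus M[1]$ with cone $C$ satisfying $C/n\simeq C\oplus C[1]$ for some $n$. By \Cref{continuitygmDM}, $P$ comes from a geometric motive $P_i$ over some $X_i$. On the other hand $C$, being constructible with $C\otimes\Q=0$, is an $n$-torsion object, hence lies in $\DMc(X,\Lambda\otimes_\Z\Z/n\Z)$; by \Cref{rigidity} and \Cref{contDcons} it descends to a constructible object $C_i$ over some (possibly larger) $X_i$. Increasing $i$ so that both $P$ and $C$ are defined there and the structure map $P\to M\oplus M[1]$ together with the triangle $P\to M\oplus M[1]\to C$ is realised over $X_i$ (using fully faithfulness just proved to lift the map, and that the colimit category is stable so cofibre sequences lift), we obtain a constructible motive $N_i$ over $X_i$ with an exact triangle $P_i\to N_i\to C_i$ whose pullback to $X$ is $P\to M\oplus M[1]\to C$; in particular $N_i$ pulls back to $M\oplus M[1]$. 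Finally, the idempotent on $M\oplus M[1]$ projecting onto $M$ is an endomorphism of $N_i$ after enlarging $i$ once more (again by fully faithfulness, and since the relevant idempotent-completion is compatible with the colimit), so $M$ is a direct factor of the pullback of $N_i$, hence lies in the essential image.

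The main obstacle is the fully faithfulness step, specifically assembling the rational and torsion computations into a single statement: one must be careful that constructibility gives the boundedness needed to run the fracture/divisibility argument, and that \Cref{chgbasratDM} is available for $M$ constructible (not just geometric), which it is. The essential surjectivity is then essentially bookkeeping with \Cref{bestlemmaDM} and the two continuity results already in hand.
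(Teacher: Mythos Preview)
Your approach matches the paper's almost exactly: check fully faithfulness after $\otimes\Q$ and $/p$ (the paper cites \Cref{conscompDM} where you invoke the idempotent-completion observation inside \Cref{bestlemmaDM}, but these amount to the same thing rationally), then use \Cref{bestlemmaDM} together with torsion continuity for essential surjectivity.

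The one genuine gap is the assertion ``$C$, being constructible with $C\otimes\Q=0$, is an $n$-torsion object, hence lies in $\DMc(X,\Lambda\otimes_\Z\Z/n\Z)$''. Knowing that $n\cdot\mathrm{id}_C=0$ in $\pi_0\End(C)$ does not immediately produce a coherent module structure over the \emph{derived} ring $\Lambda\otimes_\Z^L\Z/n\Z$: the forgetful functor from $\Lambda\otimes_\Z^L\Z/n\Z$-modules to $\Lambda$-modules is not fully faithful, and a single nullhomotopy of $n$ is only the first layer of the data needed. The paper sidesteps this by working with $C/n$ rather than $C$: the object $C/n$ is a $\Lambda\otimes_\Z^L\Z/n\Z$-module by construction, so \Cref{rigidity} and \Cref{contDcons} apply to it directly. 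Since $C/n\simeq C\oplus C[1]$ and the filtered colimit $\colim_i\DMc(X_i,\Lambda)$ is idempotent-complete, descent of $C/n$ yields descent of $C$, hence of $M\oplus M[1]$ (via the triangle and fully faithfulness), hence of $M$. With this substitution your argument is correct, though the explicit reconstruction of $M\oplus M[1]$ by lifting a boundary map and taking a cofibre is more elaborate than needed: the paper simply iterates ``closed under extensions'' (from fully faithfulness) and ``closed under retracts'' (from idempotent-completeness) to reduce straight to the descent of $C/n$.
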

\begin{proof} We first prove that the functor above is fully faithful. This can be checked by tensoring the mapping spectra with $\Q$ and $\Z/p\Z$, and then it follows from \Cref{chgbasratDM}, \Cref{conscompDM} and \Cref{contDcons}.   

     We now prove the essential surjectivity of the functor.  Let $M$ be a constructible motive. We want to prove that $M$ is in the essential image of the functor $$\colim \hspace{1mm}\DMc(X_i,\Lambda) \to \DMc(X,\Lambda).$$ Write the exact triangle $P \to M \oplus M[1] \to C$ of the above lemma. \Cref{continuitygmDM} implies that $P$ belongs to the essential image. As constructible motives are idempotent-complete, we can assume that $M$ is of the form $C/n$. But then, this follows from continuity with torsion coefficients which is a combination of \Cref{rigidity} and \Cref{contDcons}.
\end{proof}
\subsection{Constructible motives are of geometric origin.}

\begin{thm}\label{consgmDM} Let $\Lambda$ be a commutative ind-regular ring and let $X$ be a qcqs  finite-dimensional scheme. Then $$\DMgm(X,\Lambda)=\DMc(X,\Lambda).$$
\end{thm}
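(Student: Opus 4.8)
The plan is to reduce, via continuity, to the case where $X = \Spec(k)$ is the spectrum of a field and $\Lambda$ is regular, since by hypothesis $\Lambda$ is a filtered colimit of regular rings and $\DMgm$ (resp.\ $\DMc$) commutes with such colimits of coefficients by the same arguments as in \Cref{continuitygmDM} (resp.\ \Cref{continuityconsDM}); similarly any qcqs finite-dimensional scheme is a cofiltered limit of finite-type $\Z$-schemes, so using \Cref{continuitygmDM} and \Cref{continuityconsDM} together with the fact that both notions are local for a topology fine enough (Nisnevich for geometric by \Cref{descentDM}, étale for constructible by \Cref{etlocDM}) one hopes to peel off strata. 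Concretely, I would first use \Cref{geoareconsDM} to get the inclusion $\DMgm \subseteq \DMc$, so only the reverse inclusion is at stake. Then, given a constructible $M$ over $X$, I would work by Noetherian induction on a stratification witnessing constructibility: on a dense open $U \subseteq X$ the restriction $M|_U$ is dualisable, and by the localisation triangle $j_\sharp j^* M \to M \to i_* i^* M$ it suffices to treat a dualisable motive over an integral scheme together with a constructible motive over a closed subscheme of smaller dimension, the latter being geometric by induction and $i_*$ preserving geometric objects.

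The heart of the matter is therefore: a dualisable motive over an integral qcqs scheme is of geometric origin. Here I would pass to the generic point by continuity for geometric motives (\Cref{continuitygmDM}) combined with \Cref{continuityconsDM}: a dualisable motive spreads out from $\Spec(k(X))$ to some dense open, so it is enough to prove the statement over a field $k$, and by the ind-regular reduction over a field $k$ with $\Lambda$ regular. Over a field, \Cref{chgbasratDM} and \Cref{bestlemmaDM} reduce us to controlling separately the rational part and the torsion part: the rational statement $\DMgm(k,\Lambda\otimes\Q)=\DMc(k,\Lambda\otimes\Q)$ is exactly the Cisinski--Déglise result from \cite{MR3477640}, while with torsion coefficients $\DMc$ is identified via the rigidity theorem (\Cref{rigidity}) with constructible étale sheaves, and \Cref{duallocconst} says a dualisable such sheaf is étale-locally a constant perfect complex. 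So over $\Spec(k)$ with $\Z/p\Z$-coefficients one must show that étale-locally-constant perfect complexes are generated, as a thick subcategory, by the $f_\sharp \Lambda_L$ for $L/k$ finite separable — this is the input from \cite{balmer-gallauer} and \cite{ruimyAbelianCategoriesArtin2023} flagged in the introduction, giving that constructible étale sheaves over a field are geometric; gluing the rational and torsion parts back via \Cref{bestlemmaDM} (the cone $C$ with $C/n \simeq C \oplus C[1]$) then finishes the field case.

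The main obstacle I anticipate is making the dévissage genuinely work over \emph{non-Noetherian} qcqs schemes: the stratification in \Cref{defDMc} is only required to exist affine-locally and over \emph{constructible} locally closed subschemes, so "Noetherian induction on dimension" is not literally available, and one must instead feed everything through the continuity statements \Cref{continuitygmDM} and \Cref{continuityconsDM} to descend from the Noetherian (indeed finite-type over $\Z$) case, where genuine stratifications exist by the Remark after \Cref{defDMc}. A secondary delicate point is the interaction of the ind-regular colimit with dualisability and with the étale-sheaf description: one needs that the property "$M$ is geometric" can be tested after the base change $\Lambda \to \Lambda_\alpha$ along the colimit in a way compatible with \Cref{chgbasratDM}, and that the torsion pieces $\Lambda \otimes_\Z \Z/p\Z$ remain perfect (hence that \Cref{contDcons} applies) — both are available from the results quoted above, but assembling them into a clean reduction "qcqs finite-dimensional, ind-regular $\;\Rightarrow\;$ field, regular $\;\Rightarrow\;$ rational $\oplus$ torsion" is where the real work lies.
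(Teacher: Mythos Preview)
Your proposal is correct and follows essentially the same route as the paper: reduce to Noetherian via \Cref{continuitygmDM} and \Cref{continuityconsDM}, then by Noetherian induction and localisation to the generic point, and finally treat the field case by splitting into a rational piece (handled by the compactness characterisation \Cref{conscompDM}, which is the Cisinski--D\'eglise input) and a torsion cone via \Cref{bestlemmaDM}, with the torsion part controlled by the Balmer--Gallauer/\cite{ruimyAbelianCategoriesArtin2023} generation result. The one place where the paper differs slightly from your sketch is the treatment of the torsion cone $C$ over a field: rather than changing coefficients to $\Lambda\otimes_\Z\Z/p\Z$ and invoking rigidity, the paper stays with $\Lambda$-coefficients and uses that the Artin motive functor $\rho_!\colon\Dd(k_\et,\Lambda[1/p])\to\DM(k,\Lambda)$ is fully faithful with all torsion motives in its image, so that $C=\rho_! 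D$ with $D$ dualisable, and then applies the permutation-generation result directly in $\Dd(k_\et,\Lambda[1/p])$ --- this avoids the bookkeeping of passing geometricity back from $\Lambda/n$-coefficients to $\Lambda$-coefficients that your route would require.
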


\begin{lem} Let $\Lambda$ be a commutative ring. Let $k$ be a field of characteristic exponent $p$.
  Then, the Artin motive functor $$\rho_!\colon\Dd(k_{\et},\Lambda[1/p])\rar \DM(k,\Lambda[1/p])\xleftarrow{\sim}\DM(k,\Lambda)$$ is a fully faithful monoidal functor. Furthermore any torsion motive lies in its essential image. 
\end{lem}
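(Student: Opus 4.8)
The plan is to prove the two assertions separately. For full faithfulness: the functor $\rho_!\colon\Dd(k_\et,\Lambda[1/p])\to\DM(k,\Lambda[1/p])$ is a left adjoint which is symmetric monoidal, so it suffices to show that the right adjoint $\rho^*$ is conservative on the image, or equivalently that the unit of the adjunction is an equivalence on the generators $g_\sharp\Lambda_L$ for $L/k$ finite separable. By adjunction and the projection formula this reduces to computing $\rho^*\rho_!\Lambda_k$, i.e. to showing that the motivic cohomology of $k$ with $\Lambda[1/p]$-coefficients agrees with the étale (Galois) cohomology of $k$ in the relevant range — which after reducing to $\Lambda[1/p]=\Q$-part and $\Z/\ell\Z$-parts ($\ell\neq p$) is classical: for the torsion part this is rigidity/Suslin, and for the rational part it is the comparison of rational motivic and étale cohomology of a field. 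This is essentially the content cited from \cite{MR3477640} combined with \cite{ruimyAbelianCategoriesArtin2023}; I would simply invoke those: the category $\DM^{\et}(k,\Lambda[1/p])$ restricted to the localizing subcategory generated by the $g_\sharp\Lambda_L$ is, by the results on Artin motives, equivalent to $\Dd(k_\et,\Lambda[1/p])$, the equivalence being $\rho_!$.

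The first identification $\DM(k,\Lambda[1/p])\xleftarrow{\sim}\DM(k,\Lambda)$ needs a word: this is the statement that inverting the characteristic exponent is an equivalence on étale motives over a field of that characteristic exponent, which holds because $p$ is already invertible in $\DM(k,-)$ by \cite[Theorem 4.6.1 / rigidity]{MR4466640} type results (the $p$-torsion étale motives over a characteristic-$p$ field vanish). I would cite this rather than reprove it.

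For the second assertion — any torsion motive lies in the essential image of $\rho_!$ — the key input is \Cref{rigidity}. A torsion motive $M$ over $k$ is, up to the dévissage along $M\xrightarrow{\times n}M\to M/n$, a module over $\Lambda/n$ for some $n$ prime to $p$; more precisely the full subcategory of $n$-torsion objects of $\DM(k,\Lambda)$ is $\DM(k,\Lambda\otimes^L_\Z\Z/n\Z)$, and by \Cref{rigidity} this is $\Sh(k_\et,\Lambda\otimes^L_\Z\Z/n\Z)$. Over the field $k$ with separable closure $\bar k$ and absolute Galois group $G$, every such étale sheaf is a colimit of $g_\sharp$ of constant sheaves along finite separable $L/k$ (e.g. because the site $k_\et$ is generated by the $\Spec L$, so every hypersheaf is a colimit of representables tensored with the coefficient ring), hence lies in the localizing subcategory generated by the $\rho_!(\Lambda/n)_L = g_\sharp\Lambda_L$, which is contained in the essential image of $\rho_!$ since $\rho_!$ commutes with colimits and is fully faithful. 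Passing back up the tower $\Z/n\Z\to\Lambda$ by base change $-\otimes_{\Z/n\Z}(\Lambda\otimes^L\Z/n\Z)$ and then the dévissage in $n$, one gets every torsion motive.

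The main obstacle is the bookkeeping in the second part: making precise that "torsion motive" means an object killed by some integer $n$ (necessarily prime to $p$, since $p$ is invertible), correctly identifying the subcategory of such objects with étale sheaves over $\Z/n\Z$-coefficients via \Cref{rigidity}, and checking that under $\rho_!$ the generators match up — i.e. that $\rho_!$ of the constant sheaf $\Lambda_L$ is $g_\sharp\Lambda_L$, which is exactly the compatibility of $\rho_!$ with $f_\sharp$ and the monoidal structure. Everything else is a citation. I would be careful to state the dévissage argument so that it handles $M$ killed by a non-squarefree $n$, using that the subcategory of torsion objects is closed under cones and retracts and that $\rho_!$ being fully faithful with image closed under colimits means its essential image is a localizing subcategory, so it is enough to hit a generating set.
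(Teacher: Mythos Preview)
Your proposal is correct, but the paper's proof takes a shorter and structurally different route. Rather than working directly with a general ring $\Lambda$ and decomposing into rational and torsion pieces, the paper first reduces both assertions to the single case $\Lambda=\Z$: since $\DM(k,\Lambda)\simeq\Mod_\Lambda(\DM(k,\Z))$ and likewise for $\Dd(k_\et,-)$, the functor $\rho_!^\Lambda$ is obtained from $\rho_!^\Z$ by tensoring with $\Mod_\Lambda$, and this operation preserves full faithfulness by \cite[Lemma 2.14]{haineNonabelianBasechangeBasechange2022}; the essential-image claim then also transfers because a $\Lambda$-module structure on an object in the image of a fully faithful functor lifts uniquely. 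After this reduction the paper simply cites \cite[Corollary 1.2.3.4, Corollary 2.2.2.7]{ruimyAbelianCategoriesArtin2023} for the integral case and is done.

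Your approach instead unpacks what those citations contain: you argue full faithfulness by checking the unit on generators (reducing to a motivic-versus-étale cohomology comparison, split into $\Q$- and $\Z/\ell$-parts), and you handle the torsion assertion via \Cref{rigidity} directly. This is perfectly valid and has the merit of being self-contained, but it is longer, and one should be slightly careful in the full-faithfulness step: passing from ``unit is an equivalence on generators'' to ``unit is an equivalence everywhere'' requires $\rho^*$ to commute with colimits, which is true here (the generators $g_\sharp\Lambda_L$ are compact in $\DM(k,\Lambda[1/p])$ for $g$ finite étale) but deserves a sentence. The paper's reduce-to-$\Z$ trick sidesteps this bookkeeping entirely.
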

\begin{proof}
  If the result is known for $\Lambda=\Z$, then the result for an arbitrary ring of coefficients $\Lambda$ follows from tensoring both sides with $\Mod_\Lambda$: this operation preserves full faithfulness according to  \cite[Lemma 2.14]{haineNonabelianBasechangeBasechange2022}. 
  The result then follows from \cite[Corollary 1.2.3.4, Corollary 2.2.2.7]{ruimyAbelianCategoriesArtin2023}.%, noting that the assumption that $\Lambda$ needs to be flat over $\Z$ in both statements are only used to ensure that $\Lambda\otimes_\Z \Z/n\Z$ is a ring in order to use the Rigidity Theorem which is in fact unnecessary because of \Cref{rigidity}.
\end{proof}
\begin{lem}
  \label{colimringDMc}
  Let $\Lambda=\colim\Lambda_i$ be a filtered colimit of rings and let $X$ be a Noetherian scheme. Then the natural functor 
  $$\colim_i\DMc(X,\Lambda_i)\to\DMc(X,\Lambda)$$ is an equivalence.      
\end{lem}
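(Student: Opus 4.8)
The plan is to prove fully faithfulness and essential surjectivity separately, in both cases reducing to the dualisable case by dévissage, and then exploiting that over a Noetherian scheme a constructible motive admits a finite stratification by dualisable pieces.

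\textbf{Fully faithfulness.} First I would note that $\DMc(X,\Lambda_i)\to\DMc(X,\Lambda)$ is induced by the base change functor $-\otimes_{\Lambda_i}\Lambda$, which is well-defined on constructible objects since $-\otimes_{\Lambda_i}\Lambda$ preserves dualisable objects and commutes with $f^*$, so it sends a stratification witnessing constructibility to one of the same shape. To check fully faithfulness of the colimit functor it suffices, for fixed $i_0$ and objects $K,L\in\DMc(X,\Lambda_{i_0})$, to show that
$$\colim_{i\geqslant i_0}\Map_{\DMc(X,\Lambda_i)}(K_i,L_i)\to\Map_{\DMc(X,\Lambda)}(K_\Lambda,L_\Lambda)$$
is an equivalence, where $K_i=K\otimes_{\Lambda_{i_0}}\Lambda_i$ etc. By the usual dévissage and localisation argument along the finite stratification witnessing constructibility of $K$ over the Noetherian scheme $X$ (combined with the fact that $\Map$ commutes with finite limits and filtered colimits commute with finite limits), one reduces to the case where $K$ is dualisable, and then, replacing $L$ by $\underline{\Hom}(K,L)$, to $K=\Lambda_{i_0,X}$. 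So one must show $\colim_i \Map_{\DM(X,\Lambda_i)}(\Lambda_{i,X},L_i)\to\Map_{\DM(X,\Lambda)}(\Lambda_X,L_\Lambda)$ is an equivalence, i.e.\ that $\mathrm{R}\Gamma$ of $L$ commutes with the colimit $\Lambda=\colim\Lambda_i$. This is where the Noetherian hypothesis on $X$ does the work: a constructible $L$ over a Noetherian scheme is bounded, and a dévissage along a finite stratification reduces it to the question for the unit twisted and shifted, where it follows from the fact that $\DM(X,-)$ commutes with filtered colimits of coefficient rings on compact objects together with the boundedness.

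\textbf{Essential surjectivity.} Here I would again use dévissage: it suffices to show that every dualisable object $M$ of $\DMc(X,\Lambda)$ lies in the essential image (the strata being Noetherian as well, one propagates this along a finite stratification using that the functors $i_*$, $j_\sharp$ and the localisation triangle are compatible with base change of coefficients). For a dualisable $M$, I would argue that dualisability descends to a finite level: $M$ is dualisable in $\DM(X,\Lambda)$, and since $\DM(X,-)$ commutes with filtered colimits of rings and the dualisability data (evaluation and coevaluation maps and the triangle identities) involve only finitely much information, $M$ together with its dual and structure maps is the base change of an object $M_i\in\DM(X,\Lambda_i)$ which is already dualisable over $\Lambda_i$, hence lies in $\DMc(X,\Lambda_i)$; its image is $M$.

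\textbf{Main obstacle.} The delicate point is making the dévissage genuinely work at the level of the colimit category: one needs that a single finite stratification of the Noetherian scheme $X$ can be chosen to witness constructibility simultaneously, and that the localisation triangles and the functors $i_*$, $j_\sharp$, $i^*$, $j^*$ are compatible with the base change of coefficient rings $\Lambda_i\to\Lambda$ in a way that lets one induct on the number of strata inside $\colim_i\DMc(X,\Lambda_i)$. The boundedness of constructible motives over Noetherian schemes (the Remark following \Cref{defDMc}) is what ensures the dévissage terminates and that $\mathrm{R}\Gamma$ commutes with the filtered colimit of rings; without the Noetherian hypothesis this argument would break down.
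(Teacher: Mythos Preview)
Your d\'evissage framework is the right shape, but both halves of the argument rest on a claim that is false in this generality: that dualisable (or constructible) objects behave like compact objects of $\DM(X,\Lambda)$, so that mapping spectra out of them and the objects themselves descend along $\Lambda=\colim_i\Lambda_i$. The equivalence $\DM(X,\Lambda)\simeq\colim_i^{\PrL}\DM(X,\Lambda_i)$ (via $\Mod_{(-)}(\DM(X,\Z))$) only yields such descent for \emph{compact} objects, and the unit $\Lambda_X$ is \emph{not} compact in $\DM(X,\Lambda)$ unless $X$ is $\Lambda$-finite, which for integral $\Lambda$ already fails for $X=\Spec(\Q)$. Your appeal to ``boundedness'' of constructible motives after the Remark following \Cref{defDMc} does not rescue this: that remark gives a finite stratification by dualisable pieces, not boundedness in any $t$-structure (there is none on $\DM(X,\Z)$ in general), so the step ``$\mathrm{R}\Gamma$ of $L$ commutes with the filtered colimit of rings'' is exactly the compactness of $\Lambda_X$ that you do not have. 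The same obstruction kills your essential surjectivity: a dualisable $M\in\DM(X,\Lambda)$ need not be compact, so the finite amount of dualisability data does not automatically live at a finite stage $\Lambda_i$.

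The paper's proof is built around precisely this obstruction. For full faithfulness it checks the statement after $-\otimes\Q$ and after $-\otimes\Z/p\Z$ separately: rationally, \Cref{conscompDM} makes constructible objects compact and your $\PrL$-colimit argument goes through; modulo $p$, rigidity (\Cref{rigidity}) lands you in \'etale sheaves where there \emph{is} a $t$-structure, constructible sheaves are genuinely bounded, and \'etale cohomology commutes with the filtered colimit. For essential surjectivity the paper does not try to descend an arbitrary dualisable motive directly; instead it uses \Cref{bestlemmaDM} to write $M\oplus M[1]$ as an extension of a geometric motive (which descends trivially, being built from generators already defined over $\Z$) by a torsion object, and the latter is handled via rigidity, \Cref{duallocconst}, and a \v{C}ech-nerve argument using that perfect complexes over $\Lambda$ descend to some $\Lambda_i$. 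The rational/torsion splitting is the missing idea in your proposal.
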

\begin{proof}
  This proof was communicated to the authors by Denis-Charles Cisinski.
  Fully faithfulness can be checked after tensorisation by $\Q$ and $\Z/p\Z$. The rationalised statement follows from \Cref{conscompDM} and the fact that $\Lambda\mapsto \mathrm{Mod}_\Lambda(\DM(X,\Z))$ commutes with filtered colimits (\cite[Theorem 4.8.5.11]{lurieHigherAlgebra2022}). The mod $p$ statement can be proven as follows: a dévissage reduces to showing the statement for dualisable objects, and then for maps out of the constant sheaf, and even from maps out of the constant sheaf $\Z$, and it is now a consequence of commutativity of étale cohomology with uniformly bounded below complexes as in \cite[Lemma 5.3]{MR4609461}. 
  
  Now for the essential surjectivity first using \Cref{bestlemmaDM}, as it is clear that geometric objects are in the image, we reduce to reaching an object of the form $C/n$ which is of the form $\rho_! K$ with $K$ in $\Dd_\mathrm{cons}(X,\Lambda\otimes_\Z\Z/n\Z)$. Then, a dévissage reduces to a dualisable object for which by \Cref{duallocconst} there exists an étale covering $f\colon U\to X$ such that $f^*(K)$ is constant with value a complex $P$ in $\mathrm{Perf}_{\Lambda\otimes_\Z\Z/n\Z}$. As in the proof of \Cref{contDcons} we see that $K = \colim_{n\in\Delta}(f_n)_\sharp P$ with $(f_n)$ the \v{C}ech nerve of $f$. As $P$ is of the form $Q\otimes_{\Lambda_i\otimes_\Z\Z/n\Z}(\Lambda\otimes_\Z\Z/n\Z)$ (this follows from the fact that the objects of $\mathrm{Perf}_{(-)}$ are the compact objects of $\mathrm{Mod}_{(-)}$ which commutes with colimts), we see that $K = (\colim_{n\in\Delta}(f_n)_\sharp Q)\otimes_{\Lambda_i\otimes_\Z\Z/n\Z}\Lambda\otimes_\Z\Z/n\Z$ is the image of a dualisable object.
\end{proof}

  \begin{lem}\label{consgmfieldDM}
    Let $\Lambda$ be a commutative ind-regular ring. The theorem holds when $X$ is the spectrum of a field $k$.
  \end{lem}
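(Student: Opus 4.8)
The plan follows the outline of the introduction: first reduce to the case of a regular ring of coefficients, then split a constructible motive into a geometric ``rational part'' and a torsion part via \Cref{bestlemmaDM}, and finally treat the torsion part using the Artin motive results of \cite{balmer-gallauer,ruimyAbelianCategoriesArtin2023}. Throughout, only the inclusion $\DMc(k,\Lambda)\subseteq\DMgm(k,\Lambda)$ needs proof, the reverse one being \Cref{geoareconsDM}.

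\emph{Reduction to $\Lambda$ regular.} Write $\Lambda=\colim_i\Lambda_i$ with each $\Lambda_i$ regular. Since $\Spec k$ is Noetherian, \Cref{colimringDMc} gives $\DMc(k,\Lambda)=\colim_i\DMc(k,\Lambda_i)$, so every $M\in\DMc(k,\Lambda)$ is of the form $M_i\otimes_{\Lambda_i}\Lambda$ with $M_i\in\DMc(k,\Lambda_i)$ for some $i$. Granting the statement for the regular ring $\Lambda_i$, one has $M_i\in\DMgm(k,\Lambda_i)$; and since the exact symmetric monoidal functor $-\otimes_{\Lambda_i}\Lambda$ sends each generator $f_\sharp((\Lambda_i)_Y)(n)$ to the generator $f_\sharp(\Lambda_Y)(n)$, it carries $\DMgm(k,\Lambda_i)$ into $\DMgm(k,\Lambda)$, whence $M\in\DMgm(k,\Lambda)$. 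So I may assume $\Lambda$ regular.

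\emph{Reduction to the torsion case, and the torsion case.} Given $M\in\DMc(k,\Lambda)$, apply \Cref{bestlemmaDM} to get a geometric motive $P$, an exact triangle $P\to M\oplus M[1]\to C$, and an integer $n$ with $C\otimes\Q\simeq 0$ and $C/n\simeq C\oplus C[1]$. Since $\DMgm(k,\Lambda)$ is thick, it suffices to show $C/n\in\DMgm(k,\Lambda)$: then $C$, a retract of $C\oplus C[1]\simeq C/n$, is geometric; then $M\oplus M[1]$, sitting in an exact triangle with the geometric motives $P$ and $C$, is geometric; and then so is its retract $M$. Now $C/n$ is a dualisable (equivalently, over the point $\Spec k$, constructible) torsion object of $\DM(k,\Lambda)\simeq\DM(k,\Lambda[1/p])$, where $p$ is the characteristic exponent of $k$. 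By the preceding lemma on Artin motives, $C/n\simeq\rho_!(K)$ for some $K\in\Dd(k_\et,\Lambda[1/p])$; since $\rho_!$ is fully faithful, monoidal, and its essential image contains the torsion motives, $K$ is dualisable, hence $K\in\Dd_{\mathrm{cons}}(k_\et,\Lambda[1/p])$. Crucially, $\Lambda[1/p]$ is again regular, being a localisation of $\Lambda$, so the results of \cite{balmer-gallauer} recorded via \cite{ruimyAbelianCategoriesArtin2023} apply: the sheaves $g_\sharp(\Lambda[1/p])_L$, for $L/k$ finite separable, generate $\Dd_{\mathrm{cons}}(k_\et,\Lambda[1/p])$ as a thick subcategory. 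Applying the exact functor $\rho_!$, the motive $C/n$ lands in the thick subcategory of $\DM(k,\Lambda[1/p])\simeq\DM(k,\Lambda)$ generated by the $\rho_!(g_\sharp(\Lambda[1/p])_L)$; but these correspond, under the equivalence $\DM(k,\Lambda)\simeq\DM(k,\Lambda[1/p])$ which identifies $\DMgm$ with $\DMgm$, to the generators $g_\sharp(\Lambda_L)$ of $\DMgm(k,\Lambda)$. Hence $C/n\in\DMgm(k,\Lambda)$, as needed.

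\emph{Main obstacle.} The heart of the matter is the last part: the generation of constructible étale sheaves over a field by sheaves induced from finite separable extensions, i.e. the input from \cite{balmer-gallauer} via \cite{ruimyAbelianCategoriesArtin2023}. The point requiring care is that this must be invoked with coefficients in the \emph{regular} ring $\Lambda[1/p]$ --- bare rigidity would only present the torsion motive $C/n$ through étale sheaves with coefficients in the ring $\Lambda\otimes_\Z\Z/n\Z$, which is torsion and in general not regular, and it is precisely the Artin motive functor $\rho_!$ of the preceding lemma that allows passing to $\Lambda[1/p]$-coefficients. This is also the only step that uses the regularity hypothesis on $\Lambda$; the reduction to $\Lambda$ regular and the dévissage through \Cref{bestlemmaDM} are formal given the earlier results.
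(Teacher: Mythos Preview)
Your proof is correct and follows essentially the same strategy as the paper: reduce to regular $\Lambda$ via \Cref{colimringDMc}, use \Cref{bestlemmaDM} to isolate a torsion piece, and invoke the Artin motive functor together with \cite[Proposition 2.1.2.15]{ruimyAbelianCategoriesArtin2023} to conclude. The only cosmetic difference is that you pass to $C/n$ and then recover $C$ as a retract, whereas the paper works directly with $C$: since $C\otimes\Q=0$ and $C$ is already dualisable over the field, one checks (as you implicitly do for $C/n$) that $C^\vee$ is also torsion, so $C=\rho_! D$ with $D$ dualisable, and the same citation applies without the extra step.
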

    \begin{proof}
      Let $M$ be a constructible motive. We have to show that $M$ is geometric. \Cref{bestlemmaDM} yields a geometric motive $P$ and a map $P\to M\oplus M[1]$ whose cone $C$ is such that $C\otimes_\Z \Q=0$. 

    Furthermore, as $C$ is dualisable, we have $$\underline{\Hom}(C,\Lambda_k)\otimes_\Z \Q=\underline{\Hom}(C,\Lambda_k\otimes_\Z \Q)=\underline{\Hom}(C\otimes \Q,\Lambda_k\otimes_\Z \Q)=0.$$ Hence, the dual of $C$ is also a torsion motive. It suffices to show that any such $C$ is geometric.

    Assume first that $\Lambda$ is a regular ring. Writing $C=\rho_! D$, then $D$ is a dualisable object of $\Dd(k_{\et},\Lambda[1/p])$. In particular, it belongs to the thick stable subcategory generated by the $f_\sharp\underline{\Lambda[1/p]}$ for $f$ finite étale according to \cite[Proposition 2.1.2.15]{ruimyAbelianCategoriesArtin2023}. Thus, its image $C$ through $\rho_!$ is geometric as $\rho_!$ is compatible with the functors $f_\sharp$ and $f^*$ for $f$ finite étale using \cite[§4.4.2]{MR3477640}.

    Now we let $\Lambda$ be an ind-regular commutative ring, we write it as $\Lambda=\colim_i\Lambda_i$ with each $\Lambda_i$ regular. By \Cref{colimringDMc} any object of $\DMc(k,\Lambda)$ comes from a object in $\DMc(k,\Lambda_i)$ for some $i$, hence is geometric by the above paragraph.
    \end{proof}

    \begin{proof}(of \Cref{consgmDM})
      Using \Cref{continuitygmDM} and \Cref{continuityconsDM}, we can assume $X$ to be Noetherian.
      This is now an adaptation of the arguments given in \cite[Lemma 6.3.25]{MR3477640}.
      Let $M$ be a constructible motive on $X$. By Noetherian induction, using the localisation property and the stability of geometric motives under pushforwards by closed immersions it suffices to show that there is an open immersion $j\colon U \to X$ such that $j^*(M)$ is geometric, \textit{i.e.} we can replace $X$ with any non-empty open subscheme. Therefore, we can assume $X$ to be irreducible. Let $\eta$ be the generic point of $X$. Consider the commutative diagram:
      $$\begin{tikzcd}
        \underset{U\text{ open, }\eta\in U}{\colim} \DMgm(U,\Lambda) \ar[d,"\alpha"]\ar[r,"u"] & \underset{U\text{ open, }\eta\in U}{\colim} \DMc(U,\Lambda)\ar[d,"v"]\\
        \DMgm(\eta,\Lambda) \ar[r,"\beta"]& \DMc(\eta,\Lambda).
      \end{tikzcd}$$
      Then, \Cref{consgmfieldDM} above implies that $\beta$ is an equivalence and \Cref{continuitygmDM} implies that $\alpha$ is an equivalence. On the other hand $u$ is fully faithful and $v$ is an equivalence by \Cref{continuityconsDM}. Hence, they are both equivalences. The fact that $u$ is an equivalence yields the result.
    \end{proof}

    \begin{ex}\label{Contrex}
   %When $\Lambda$ is not ind-regular, for example if $\Lambda = \Z/n\Z$ we do not know if such a result has to be expected. 
   This result fails without the assumption that $\Lambda$ is regular: take $\Lambda=\F_2[x]/(x^2-1)$. Then, by rigidity, we have $$\DM(\Spec(\R),\Lambda) \underset{\rho_!}{\xleftarrow{\sim}} \Sh(\R_{\et},\Lambda)\simeq\D(\Lambda[C_2])$$ where $C_2$ is the cyclic group with $2$ elements. Taking dualisable objects, we get an equivalence between $\DMc(\Spec(\R),\Lambda)$ and the subcategory $\D_{\Lambda\text{-perf}}(\Lambda[C_2])$ of $\D(\Lambda[C_2])$ made of those complexes with a perfect underlying complex of $\Lambda$-modules. 

   If $f\colon X\to \R$ is smooth, this equivalence sends $M_\R(X)(n)$ to the complex $f_\sharp(\Lambda)(n)$. The latter is isomorphic to $(f_\sharp(\F_2)(n))\otimes_{\F_2}\Lambda$. As by \cite[Corollary 4.5]{balmer-gallauer}, $f_\sharp(\F_2)(n)$ belongs to the thick subcategory generated by $\F_2$ and $\F_2[C_2]$, the complex $f_\sharp(\Lambda)(n)$ therefore belongs to the thick subcategory $\D_\mathrm{perm}(\Lambda[C_2])$ generated by $\Lambda$ and $\Lambda[C_2]$. Hence, through this equivalence $\DMgm(\Spec(\R),\Lambda)$ corresponds to $\D_\mathrm{perm}(\Lambda[C_2])$.
   
   Consider the complex $\Lambda_x$ of $\Lambda[C_2]$-modules whose underlying complex is $\Lambda$ placed in degree $0$ and with the action of the generator of $C_2$ given by multiplication by $x$. The complex $\Lambda_x$ belongs to $\D_{\Lambda\text{-perf}}(\Lambda[C_2])$ but by \cite[Example 3.15]{MR4468990} it does not belong to $\D_\mathrm{perm}(\Lambda[C_2])$. Hence, the inclusion $\DMgm(\Spec(\R),\Lambda) \to \DMc(\Spec(\R),\Lambda)$ is strict. 

  In fact, the result fails even for a characteristic $0$ non-regular ring: take $\Lambda' = \Z[x]/(x^2-1)$, and consider the $\Lambda'[C_2]$-module $\Lambda'_x$, defined in the same way. It can be seen as an étale motive through the Artin motive functor $\rho_!\colon \Sh(\R_\et,\Lambda')\to\DM(\Spec(\R),\Lambda')$. This object is dualisable, and if it were geometric then its image under the functor $-\otimes_{\Lambda'}\Lambda$, which is $\Lambda_x$ would also be geometric, which is not.
   %In fact \cite[Example 3.15]{MR4468990} is an example of a dualisable object $M$ in $\DMc(\Spec(\R),\F_2[x]/(x^2-1))$ which is \emph{not} a geometric Artin motive (that is it belongs to the presentable $\infty$-category of Artin motives spanned under colimits by motives of étale maps, but it cannot be written as a \emph{finite} colimit of such). It is worth noting that by their main theorem in \cite{balmer-gallauer}, as $\F_2[x]$ is regular, the image of $M$ in $\DM(\Spec(\R),\F_2[x])$ is a geometric Artin motive.
    \end{ex}  

    % \begin{cor}
    %   \label{colimring}
    %   Let $\Lambda$ be a commutative ring and let $X$ be a finite dimensional qcqs scheme. Assume can write $\Lambda=\colim_i\Lambda_i$ as a filtered colimit of rings. Then the canonical functor 
    %   $$\colim_i\DMgm(X,\Lambda_i)\to\DMgm(X,\Lambda)$$ is an equivalence.
    % \end{cor}
    % \begin{proof}
    %   By continuity we may assume that $X$ is noetherian, where this follows from \Cref{colimringDMc}.
    % \end{proof}
\subsection{Milnor excision and h-descent.}

\begin{prop}\label{cdh}
  Let $\Lambda$ be a commutative ring. The functor $\DMgm(-,\Lambda)$ satisfies cdh-descent over qcqs finite-dimensional schemes. Over qcqs schemes of finite valuative dimension, it also satisfies cdh hyperdescent.
\end{prop}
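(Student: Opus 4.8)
The plan is to leverage the general machinery for cdh-descent: a Nisnevich sheaf that additionally sends \emph{abstract blow-up squares} to Cartesian squares is a cdh-sheaf, and for hyperdescent over schemes of finite valuative dimension one invokes the bounded-dimension criterion (as in the work of Elmanto--Hoyois--Khan--Sosnilo--Yakerson or Bachmann--Elmanto--Morrow). Since $\DMgm(-,\Lambda)$ is already a Nisnevich hypersheaf over qcqs schemes by \Cref{descentDM}, the essential remaining point is to verify excision for abstract blow-up squares, i.e.\ for a Cartesian square with $i\colon Z\to X$ a closed immersion, $p\colon X'\to X$ proper, and $p$ an isomorphism over $X\setminus Z$, the square
$$\begin{tikzcd}
\DMgm(X,\Lambda) \ar[r]\ar[d] & \DMgm(X',\Lambda)\ar[d]\\
\DMgm(Z,\Lambda) \ar[r]& \DMgm(Z',\Lambda)
\end{tikzcd}$$
with $Z' = Z\times_X X'$ is Cartesian. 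First I would reduce to the Noetherian case using \Cref{continuitygmDM}: a general abstract blow-up square over a qcqs finite-dimensional $X$ is, by the spreading-out results of \cite{ega4} already used in the proof of \Cref{continuitygmDM} and \Cref{PFproper}, the pullback of such a square over a Noetherian finite-dimensional base, and the limit formula for $\DMgm$ reduces the statement to that level.

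Over a Noetherian scheme, the key input is that $\DM(-,\Lambda)$ itself satisfies cdh-descent — this is a known property of étale (equivalently h-)motives, so the square above is Cartesian in $\DM$. It then suffices to check that the resulting description is compatible with geometricity, i.e.\ that an object $M\in\DM(X,\Lambda)$ is geometric once $i^*M$, $p^*M$ and the induced object over $Z'$ are geometric. For this I would use the gluing/localisation formalism: writing $j\colon U = X\setminus Z\hookrightarrow X$, the motive $M$ sits in a localisation triangle $j_!j^*M\to M\to i_*i^*M$, and since $i_*$ preserves geometric objects (as recorded in the Remark after \Cref{defDMc}) and $i^*M$ is geometric, it remains to handle $j_!j^*M$. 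Because $p$ is an isomorphism over $U$, one has $j^*M \simeq (j')^*p^*M$ where $j'\colon U\hookrightarrow X'$, and $p^*M$ is geometric; the proper pushforward $p_!=p_*$ preserves geometric objects by \Cref{PFproper}(1), and a comparison of $j_!j^*M$ with $p_*$ applied to the analogous open-supported piece on $X'$ — valid precisely because the square is an abstract blow-up square — shows $j_!j^*M$ is geometric. Combining, $M$ is geometric, which is the desired descent statement. For the hyperdescent clause, once finitary cdh-descent is known one upgrades to hyperdescent over schemes of finite valuative dimension by the standard bounded-dimension argument (Nisnevich cohomological dimension bounded by the Krull/valuative dimension, as in \cite{MR4466640}), using that $\DMgm\subseteq\DM$ and that $\DM$ already has this hyperdescent.

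The main obstacle I anticipate is the identification of $j_!j^*M$ as a geometric motive: this requires carefully exploiting that the square is \emph{abstract blow-up} (not merely Cartesian) to relate the open-supported part of $M$ on $X$ to data on $X'$, and then invoking \Cref{PFproper}(1) to push forward. A clean way to organise this is to note that $\DMgm(-,\Lambda)$ is a \emph{subfunctor} of $\DM(-,\Lambda)$ in the sense used in the proofs of \Cref{descentDM} and \Cref{etlocDM}, so one only needs: (a) $\DM$ has cdh-descent (resp.\ hyperdescent in finite valuative dimension), and (b) geometricity can be detected cdh-locally. Point (b) is exactly the localisation-triangle argument above together with stability of geometric motives under $i_*$ and under $p_*$ for $p$ proper; the subsheaf principle then automatically promotes this to the sheaf condition, just as in the Nisnevich case.
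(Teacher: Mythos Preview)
Your proposal is correct and follows essentially the same strategy as the paper: both deduce the result from cdh-descent for the ambient functor $\DM(-,\Lambda)$ together with the stability of geometric motives under proper pushforward (\Cref{PFproper}(1)), and both obtain hyperdescent from hypercompleteness of the cdh-topos in finite valuative dimension. The only differences are in packaging: the paper handles effectivity in one line by observing that the glued object is a finite limit of proper pushforwards of the given geometric descent data (along $p$, $i$, and $i\circ g$), whereas you unfold this via the localisation triangle $j_!j^*M\to M\to i_*i^*M$ and the identification $j_!j^*M\simeq p_*j'_\sharp(j')^*p^*M$; and your reduction to the Noetherian case via continuity is unnecessary, since the paper invokes cdh-descent for $\DM$ directly in the qcqs finite-dimensional generality.
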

\begin{proof}
   Since the presentable $\infty$-category $\DM(-,\Lambda)$ satisfies Nisnevich descent, the localisation property and proper base change, it satisfies cdh-descent as the proof of \cite[Theorem 3.3.10]{MR3971240} applies in this level of generality. Thus $\DMgm(-,\Lambda)$ satisfies non effective cdh descent. The effectivity of descent follows from cdh-excision: a descent data of geometric motives gives us a motive as a finite limit of proper pushforwards, which is itself geometric by \Cref{PFproper} (1). For hyperdescent, this follows from the fact that the cdh topos is hypercomplete over qcqs schemes of finite valuative dimension by \cite[Corollary 2.4.16]{zbMATH07335443}.
\end{proof}

We now prove that geometric étale motives satisfy h-descent in the sense of Rydh. 

\begin{thm}\label{hdesc}
  Let $\Lambda$ be a commutative ind-regular ring. The functor $\DMgm(-,\Lambda)$ satisfies h-descent in the sense of \cite[Tag 0ETQ]{stacks-project} over qcqs finite-dimensional schemes.
\end{thm}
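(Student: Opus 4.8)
The plan is to reduce h-descent to the two topologies that generate it in Rydh's decomposition, namely the cdh-topology and the fppf-topology (more precisely, the finite flat surjective, or "fps", topology together with fppf covers by smooth maps which are already Nisnevich-local after refinement). By \cite[Tag 0ETQ]{stacks-project}, an h-cover of a qcqs scheme can be refined by a composite of a cdh-cover and an fppf-cover, so it suffices to prove that $\DMgm(-,\Lambda)$ satisfies descent for each of these two classes of covers separately, and then assemble them. Since we already know $\DM(-,\Lambda)$ satisfies h-descent as a presentable-valued functor (this is part of the six-functor formalism over qcqs schemes), the only thing to check is that descent data built from \emph{geometric} motives glue to a \emph{geometric} motive; as in \Cref{cdh}, the glued object is computed as a finite limit, hence a finite colimit up to shift, of pushforwards along the covering maps and their fibre products, so it is enough that geometric motives are stable under the relevant pushforwards.

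The cdh part is exactly \Cref{cdh}: geometric motives satisfy cdh-descent, and the effectivity comes from the stability of geometric motives under proper pushforward (\Cref{PFproper}(1)). For the remaining fppf-type covers, first I would use Zariski descent (already known) and continuity (\Cref{continuitygmDM}) to reduce to $X$ affine and, in the finite flat surjective case, to a cover $f\colon Y\to X$ with $Y$ affine and $f$ finite locally free; spreading out reduces further to $X$ Noetherian. Here the key input is that for $f$ finite (hence proper), $f_*$ preserves geometric motives by \Cref{PFproper}(1), so the descent datum assembles into a geometric motive, as in the cdh argument. The genuinely fppf (smooth surjective, non-étale) covers are handled by factoring them, Nisnevich-locally on the source, through a smooth map admitting a section after an étale base change — i.e. one reduces smooth-surjective descent to Nisnevich descent, which holds by \Cref{descentDM}. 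This is where the hypothesis that $\Lambda$ is ind-regular finally enters: to pass from the classes of covers above to \emph{all} h-covers one must know that geometric and constructible motives coincide, i.e. invoke \Cref{consgmDM}, because constructible motives manifestly satisfy h-descent once we know geometric ones do on the generating covers — or, dually, \Cref{consgmDM} lets us replace $\DMgm$ by $\DMc$, for which $f^*$-pullback stability plus the finite-limit formula for the glued object make descent formal along finite surjections.

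The main obstacle I expect is the effectivity of descent along finite flat surjective covers that are not étale: unlike the cdh case, here one cannot use a Nisnevich-local section, and one must genuinely check that the totalization of the Čech cosimplicial diagram $\DMgm(Y^{\bullet/X},\Lambda)$ has a geometric limit. The argument is to observe that $f_*$ is conservative and preserves geometric objects (\Cref{PFproper}(1)), that the cosimplicial object is built from $f_*$ and its iterates applied to geometric motives, and that the limit — being a \emph{finite} limit once one truncates using the bounded cohomological dimension furnished by \Cref{chgbasratDM} rationally and by \Cref{rigidity} with torsion coefficients — is therefore geometric. Concretely: descent after rationalization holds because geometric $=$ compact there and $\DM(-,\Q\otimes\Lambda)$ satisfies h-descent; descent with $\Z/n\Z$-coefficients holds by \Cref{rigidity} and the corresponding statement for constructible étale sheaves, which is classical (finite flat pushforward preserves constructibility of torsion étale sheaves). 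Gluing the rational and torsion descent data along the arithmetic fracture square, exactly as in the proof of \Cref{continuityconsDM} via \Cref{bestlemmaDM}, then upgrades these to an integral statement and shows the glued object is of the form "geometric $\oplus$ something built from a $C/n$", hence geometric. Assembling the cdh and fppf cases through Rydh's refinement theorem completes the proof.
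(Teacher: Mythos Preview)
Your high-level decomposition---reducing h-descent to cdh-descent (\Cref{cdh}) plus fppf-descent, and the latter to Nisnevich descent plus descent along finite flat surjections---matches the paper exactly, and the cdh and Nisnevich parts are fine. The gap is entirely in the finite flat case.

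Your proposed argument there does not work as written. The totalization over $\Delta$ of the \v{C}ech cosimplicial diagram is \emph{not} a finite limit, and no cohomological-dimension bound on $X$ converts it into one: the \v{C}ech nerve of a finite flat cover of degree $>1$ has infinitely many nondegenerate terms, so knowing that each $(f_n)_*$ preserves geometric objects (\Cref{PFproper}) does not by itself show the glued object is geometric. Your fallback via the arithmetic fracture square is also incomplete. Even granting the torsion case via rigidity, the rational case is not clear: the big category $\DM(-,\Lambda\otimes\Q)$ has h-descent in $\PrL$, but this does \emph{not} imply that a descent datum of compact objects glues to a compact object, precisely because the totalization is infinite. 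And ``gluing the rational and torsion descent data along the fracture square'' is not yet an argument: you would need ``being geometric'' to be a property detectable after $\otimes\Q$ and $\otimes\Z/p$, whereas \Cref{bestlemmaDM} only produces a geometric approximation whose cone is torsion, not a criterion in the other direction. Finally, your explanation of where ind-regularity enters is off: passing from generating covers to arbitrary h-covers is formal once descent holds for the generators; the hypothesis is needed elsewhere.

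The paper avoids the infinite totalization by a different reduction. After spreading out to a Noetherian base, one finds a finite constructible stratification of the base over which the finite map factors as a purely inseparable map followed by a finite \'etale map. Over each stratum, descent then follows from \'etale descent for $\DMc$ (\Cref{etlocDM}) combined with $\DMgm=\DMc$ (\Cref{consgmDM}, which is where ind-regularity is actually used), together with the fact that pullback along universal homeomorphisms is an equivalence on $\DM$. The strata are reassembled by induction using the localisation triangle $j_\sharp j^*\to\mathrm{id}\to i_*i^*$: for full faithfulness one compares mapping spectra through this triangle, and for effectivity one runs the same d\'evissage on the cofiber sequence of $\infty$-categories $\DMgm(Y)\to\DMgm(X)\to\DMgm(U)$ and its analogue on the limit side. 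This replaces the infinite limit by a finite induction on strata.
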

\begin{proof}
  By \cite[Theorem 2.9]{zbMATH06786946}, h-descent is equivalent to cdh-descent combined with fppf-descent. Also, fppf-descent is the same as finite flat descent together with Nisnevich descent, see \cite[Definition 2.5(15)]{zbMATH06444763}. Therefore it suffices to prove that $\DMgm(-,\Lambda)$ has descent for the finite flat topology. Let $f\colon T\to S$ be a finite morphism.

  By writing $S=\lim_i S_i$ where the $S_i$ are affine $\Z$-schemes of finite type, we can find an index $i_0$ such that $f$ is the pullback of a finite map $f_{i_0}\colon T_{i_0}\to S_{i_0}$ using \cite[Théorème 8.10.5]{ega4}. By Noetherian induction, we can find a finite stratification $\{S_{i_0}^1,\ldots, S_{i_0}^p \}$ of $S_{i_0}$ such that $S_{i_0}^{k}$ is open in $\bigcup_{k'\geqslant k}S_{i_0}^{k'}$ for each $k$ and $f_{i_0}$ is the composition of a finite étale map and a purely inseparable map over each $S_{i_0}^k$. Hence, pulling back this stratification to $S$, we get a stratification $\{S^1,\ldots, S^p\}$ of $S$ with the same properties.
  
First we prove non-effective descent. For this, it suffices to prove that for any objects $M$ and $N$ of $\DMgm(S,\Lambda)$, the morphism 
  $$\Map_{\DM}(M,N)\to \lim_{n\in\Delta} \Map_{\DM}(M,(f_n)_*f_n^*N)$$ is an equivalence, where for any non-negative integer $n$, we let $f_n\colon T_n\to S$ be the projection given by the \v{C}ech nerve of $f$. Assume first that we have a closed immersion $i\colon Y\to S$ with open complement $j\colon U\to S$ such that the above map is an equivalence for $(j^*M,j^*N)$ and $(i^*M,i^*N)$. Then using that limit of exact triangles are exact triangles we have a morphism of exact triangles of spectra:
  \[\begin{tikzcd}
    {\Map(M,j_\sharp j^*N)} & {\Map(M,N)} & {\Map(M,i_* i^*N)} \\
    {\lim\limits_{n\in\Delta} \Map(M,(f_n)_*f_n^*j_\sharp j^*N)} & {\lim\limits_{n\in\Delta} \Map(M,(f_n)_*f_n^*N)} & {\lim\limits_{n\in\Delta} \Map(M,(f_n)_*f_n^*i_*i^*N)}
    \arrow[from=1-1, to=1-2]
    \arrow[from=1-2, to=1-3]
    \arrow[from=1-2, to=2-2]
    \arrow[from=1-1, to=2-1]
    \arrow[from=2-1, to=2-2]
    \arrow[from=2-2, to=2-3]
    \arrow[from=1-3, to=2-3]
  \end{tikzcd}\]
 But we have canonical identifications $$(f_n)_*f_n^*j_\sharp j^*N\simeq j_\sharp (f_n)_*f_n^*j^*N\text{ and }(f_n)_*f_n^*i_*i^*N\simeq i_* (f_n)_*f_n^*i^*N,$$ thus the left and right vertical maps of the above diagram are equivalences by assumption, so that the middle one also. Therefore, by induction on the number $p$ of strata of $S$, this proves the full faithfulness by induction, given that we have descent along finite étale maps by \Cref{etlocDM} and that localisation ensures that $\DMgm(-,\Lambda)$ is an equivalence along purely inseparable maps.

 For the effectivity of descent, we also prove the result by induction. Keeping the same notations as before, we have the following morphism of cofiber sequences of $\infty$-categories:
 \[\begin{tikzcd}
	{\DMgm(Y,\Lambda)} & {\DMgm(X,\Lambda)} & {\DMgm(U,\Lambda)} \\
	{\lim\limits_{n\in\Delta}\DMgm(Y_n,\Lambda)} & {\lim\limits_{n\in\Delta}\DMgm(X_n,\Lambda)} & {\lim\limits_{n\in\Delta}\DMgm(U_n,\Lambda)}
	\arrow[from=1-1, to=2-1,"\sim"]
	\arrow[from=2-1, to=2-2,"\tilde{i_*}"]
	\arrow[from=2-2, to=2-3,"\tilde{j^*}"]
	\arrow["{j^*}", from=1-2, to=1-3]
	\arrow["{i_*}", from=1-1, to=1-2]
	\arrow[from=1-2, to=2-2]
	\arrow[from=1-3, to=2-3,"\sim"]
\end{tikzcd}\]
in which the left vertical map is an equivalence by induction, and the right vertical map is an equivalence by étale descent. The functors $\tilde{i_*}$ and $\tilde{j_\sharp}$ have left adjoint $\tilde{i^*}$ and $\tilde{j_\sharp}$ that are computed term wise. In particular as we have a cofiber sequence of stable $\infty$-categories, in $\lim_{\Delta}\DMgm(X_n,\Lambda)$ we have an exact sequence 
$$\tilde{j_\sharp}\tilde{j^*}\to\mathrm{Id}\to\tilde{i_*}\tilde{i^*},$$ which is compatible with the same exact sequence in $\DMgm(X,\Lambda)$. Thus, if $(M_n)$ is some descent data in $\lim_\Delta\DMgm(X_n,\Lambda)$, there are objects $N_U$ and $N_Y$ in $\DMgm(U,\Lambda)$ and $\DMgm(Y,\Lambda)$ such that they correspond to the restrictions of $(M_n)$ to $(U_n)$ and $(Y_n)$. Thus in $\lim_\Delta\DMgm(X_n,\Lambda)$ there is a map $\tilde{i_*}N_Y\to \tilde{j_\sharp}N_U[1]$ of fiber $(M_n)$, which by full faithfulness gives a similar map in $\DMgm(X,\Lambda)$, whose fiber has image $(M_n)$.
\end{proof}

We finish the paper by proving Milnor excision.
\begin{prop}\label{milnor}
    Let $\Lambda$ be a commutative ring. The functor $\DMgm(-,\Lambda)$ satisfies Milnor excision (\cite[Definition 3.2.2]{zbMATH07335443}) over qcqs finite-dimensional schemes.
\end{prop}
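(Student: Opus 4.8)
The plan is to bootstrap Milnor excision for $\DMgm(-,\Lambda)$ from Milnor excision for the ambient presentable category $\DM(-,\Lambda)$ (\cite[Theorem 5.6]{MR4319065}), the genuinely new observation being that geometricity can be glued along the localisation sequence. Recall that a Milnor square consists of a closed immersion $i\colon Z\to X$ and an affine morphism $f\colon Y\to X$, with $W=Y\times_X Z$, the square being moreover cocartesian; Milnor excision asks that
$$\DMgm(X,\Lambda)\longrightarrow\DMgm(Y,\Lambda)\times_{\DMgm(W,\Lambda)}\DMgm(Z,\Lambda)$$
be an equivalence.

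The first step I would carry out is to note that $f$ restricts to an isomorphism over the open complement $U=X\setminus Z$. On affine opens of $X$ the cocartesian condition reads $A=B\times_{B/\mathfrak{a}B}A/\mathfrak{a}$ with $\mathfrak{a}\xrightarrow{\sim}\mathfrak{a}B$; localising at any $s\in\mathfrak{a}$ annihilates the two quotient rings, so $A[1/s]\xrightarrow{\sim}B[1/s]$, and the opens $D(s)$ for $s\in\mathfrak{a}$ cover $U$. Writing $j\colon U\hookrightarrow X$ for the open immersion, I would then prove the key claim: $M\in\DM(X,\Lambda)$ is geometric if and only if $f^*M$ and $i^*M$ are geometric. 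The forward direction holds because $f^*$ and $i^*$ preserve geometric objects. For the converse, consider the localisation triangle
$$j_\sharp j^*M\to M\to i_*i^*M.$$
Here $i_*i^*M$ is geometric because $i^*M$ is and $i_*$ along a closed immersion preserves geometric objects; and $j^*M$ is geometric because its pullback along the isomorphism $f^{-1}(U)\xrightarrow{\sim}U$ is the restriction of the geometric motive $f^*M$ to an open subscheme, so $j_\sharp j^*M$ is geometric since $j_\sharp$ preserves geometric objects. As $\DMgm(X,\Lambda)$ is thick, $M$ is geometric.

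Granting the claim, the rest is formal. The comparison functor is fully faithful: for $M,N\in\DMgm(X,\Lambda)$ the space $\Map(M,N)$ computed in $\DMgm(X,\Lambda)$ agrees with the one computed in $\DM(X,\Lambda)$, which by Milnor excision for $\DM(-,\Lambda)$ is the fibre product of the mapping spaces over $Y$, $W$ and $Z$; since $\DMgm(Y,\Lambda)$, $\DMgm(W,\Lambda)$ and $\DMgm(Z,\Lambda)$ are full subcategories containing the relevant restrictions, this is precisely the mapping space in the fibre-product category. It is essentially surjective: a compatible pair of geometric motives over $Y$ and $Z$ is, a fortiori, an object of $\DM(Y,\Lambda)\times_{\DM(W,\Lambda)}\DM(Z,\Lambda)$, hence of the form $(f^*M,i^*M)$ for some $M\in\DM(X,\Lambda)$ by Milnor excision for $\DM(-,\Lambda)$; and $M$ is geometric by the claim. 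Thus the comparison functor is an equivalence.

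The main obstacle is really concentrated in the localisation-triangle argument, specifically in the observation that $f$ is an isomorphism away from $Z$: this is what lets one replace the a priori opaque affine morphism $f$ by an open immersion, for which $j_\sharp$ is known to preserve geometricity. The remaining subtlety is that \cite[Theorem 5.6]{MR4319065} may only be available for Noetherian schemes; if so, one first reduces via \Cref{continuitygmDM}, writing the Milnor square as a cofiltered limit of Milnor squares of Noetherian schemes — a spreading-out argument (\cite[Théorème 8.10.5]{ega4}) that one must run directly on the fibre-product presentation, since $B\times_{B/\mathfrak{a}B}A/\mathfrak{a}$ need not be of finite presentation over the base ring.
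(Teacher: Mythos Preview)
Your argument rests on Milnor excision for the ambient presentable category $\DM(-,\Lambda)$, which you take from \cite[Theorem~5.6]{MR4319065}. That theorem, however, is proved under a $\Lambda$-finiteness hypothesis --- essentially compact generation --- and for étale motives this is known only when $\Lambda$ is a $\Q$-algebra (see the footnote preceding \Cref{conscompDM}). For $\Lambda=\Z$ over a scheme with a point of positive characteristic, $\DM(X,\Z)$ is not compactly generated and no Milnor excision statement for the big category is available as a black box. Your reduction to the Noetherian case does not touch this: the obstruction lives in the coefficient ring, not in the base scheme.

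The paper's proof is organised precisely to avoid invoking Milnor excision for $\DM(-,\Lambda)$. It builds a candidate right adjoint on $\DMgm$ by hand (using that $f_*$, $i_*$ preserve geometric motives in the Noetherian situation) and then checks that the counit maps $f^*M\to M_Y$ and $i^*M\to M_Z$ are equivalences by tensoring with $\Q$ and with $\Z/p\Z$, using \Cref{chgbasratDM} to pass these operations through the relevant functors. Rationally one is in the compactly generated regime and \cite[Theorem~5.6]{MR4319065} applies; mod~$p$ one uses rigidity together with arc-descent for étale sheaves. A further reduction handles non-flat $\Lambda$. One could in principle run exactly this fracture argument to establish Milnor excision for $\DM(-,\Lambda)$ itself and then feed it into your proof, but that is the same work relocated, not bypassed; as written, your proposal has a genuine gap at its load-bearing citation. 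Your localisation-triangle criterion for geometricity is correct and pleasant, and would streamline the effectivity step once the ambient excision is in hand.
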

\begin{proof}
    First, using \cite[Lemma 3.2.6]{zbMATH07335443} any Milnor square (\textit{i.e.} of the form below, bicartesian, with $f$ affine and $i$ a closed immersion) of qcqs finite-dimensional schemes 
    $$\begin{tikzcd}W \ar[d,"g"]\ar[r,"k"] & Y \ar[d,"f"] \\
    Z \ar[r,"i"] & X 
    \end{tikzcd}$$
    can be approximated by Milnor squares of finitely presented $\Z$-schemes (and therefore of Noetherian schemes) such that $f$ is of finite type. By continuity (\Cref{continuitygmDM}), and as filtered colimits are compatible with finite limits in $\catinfty$, we can therefore assume our schemes to be Noetherian and finite-dimensional and $f$ to be of finite type.

    Because pushforwards preserve geometric motives over quasi-excellent schemes by \cite[Theorem 6.2.13]{MR3477640}, the functor 
    \begin{equation}\label{milnorfunctor}\DMgm(X,\Lambda)\to \DMgm(Y,\Lambda)\times_{\DMgm(W,\Lambda)}\DMgm(Z,\Lambda) \end{equation} admits a right adjoint. By localization, the family of functors $(f^*,i^*)$ is conservative thus the above functor is conservative. To show that it is an equivalence, it suffices to show that its right adjoint is fully faithful.
    Thus take geometric motives $M_Y$, $M_Z$ and $M_W$ over $Y$, $Z$ and $W$ such that we have equivalences $k^*(M_Y)\simeq M_W \simeq g^*(M_Z)$. Let $M$ be the fiber of the map $f_*(M_Y)\oplus i_*(M_Z)\to f_*k_*(E_W)$. It is the image under the right adjoint of \cref{milnorfunctor} of the object $(M_Y,M_Z,W)$.  To finish the proof, it suffices to show that the natural maps $f^*(M)\to M_Y$ and $i^*(M)\to M_Z$ are equivalences. 
    Assume first that the ring $\Lambda$ is flat over the integers. Using \Cref{chgbasratDM}, we can therefore assume $\Lambda$ to be a $\Q$-algebra or to be a $\Z/p\Z$-algebra for some prime number $p$. The first case follows from \cite[Theorem 5.6, Lemma 2.1]{MR4319065} while the second case follows from \cite[Theorem 5.14]{MR4278670} and \cite[Corollary 3.2.11]{zbMATH07335443}.
    Now for a ring $\Lambda$ of finite type over the integers that we write as $R/I$ with $R$ a polynomial ring, we proceed as follows: as $\Lambda$ is perfect over $R$, the three motives $M_Y$, $M_Z$ and $M_W$ map to geometric objects of $\DMgm(-,R)$, together with descent data. In particular, the maps $f^*(M)\to M_Y$ and $i^*(M)\to M_Z$ are equivalences in $\DMgm(-,R)$ hence in $\DMgm(-,\Lambda)$. Thus the functor \cref{milnorfunctor} is an equivalence whenever $\Lambda$ is of finite type over the integers.
    Finally for a general ring, we can write it as a colimit of ring of finite type over the integers and with \Cref{colimringDMc} this is just a consequence of the commutation of filtered colimits with finite limits.
\end{proof}

\subsection*{Acknowledgments.}
The authors would like to thank Denis-Charles Cisinski very warmly for his interest in this work and his help on some technical points that have improved greatly the quality of the paper, in particular for mentioning the existence of \Cref{duallocconst} and for indications regarding the proof of \Cref{colimringDMc}. We thank Martin Gallauer for answering a question on \cite[Example 3.15]{MR4468990}.
We are grateful to Sophie Morel and Frédéric Déglise for their support and for useful comments. Finally, we would like to thank Fabrizio Andreatta, Federico Binda, Vova Sosnilo and Alberto Vezzani for some useful conversations.

We were funded by the ANR HQDIAG, RR was funded by the \emph{Prin 2022: The arithmetic of motives and L-functions} and ST had his PhD fundings given by the É.N.S de Lyon.

    \bibliographystyle{alpha}
%  \bibliography{BibSCNet}

Raphaël Ruimy \url{raphael.ruimy@unimi.it}

Dipartimento di Matematica "F. Enriques" - Università degli Studi di Milano,

via Cesare Saldini 50, I-20133 Milano, Italy\\

Swann Tubach \url{swann.tubach@ens-lyon.fr}

E.N.S Lyon, UMPA, 

46 Allée d'Italie, 69364 Lyon Cedex 07, France
\end{document}